\author{Branko Nikoli\'c}
\thanks{The author gratefully acknowledges the support of an International Macquarie University Research Scholarship.}
\address{Mathematics department, Macquarie University, \\ NSW 2109, Australia}
\title {Strictification tensor product of 2-categories}
\keywords{Lax functor, strictification, distributive law, lax Gray product, free monoid}
\theoremstyle{definition}
\newtheorem{defn}{Definition}[section]
\newtheorem{prop}{Proposition}[section]
\newtheorem{thm}{Theorem}[section]
\newtheorem{cor}{Corollary}[section]
\newcommand{\ptoLR}{}
\newcommand{\pgetsLR}{}
\DeclareRobustCommand{\ptoLR}{\mathrel{\mathpalette\p@toLR@getsLR\to}}
\DeclareRobustCommand{\pgetsLR}{\mathrel{\mathpalette\p@toLR@getsLR\gets}}
\newcommand{\p@toLR@getsLR}[2]{
  \ooalign{\hidewidth
  $\m@th#1\mapstochar
  \mkern2.5mu
  $
  \hidewidth\cr
  $\m@th#1\longleftrightarrow$
  \cr
  }%
}
\begin{document}

\maketitle
\begin{abstract}
Given 2-categories $\mathcal{C}$ and $\mathcal{D}$, let $\textrm{Lax}(\mathcal{C},\mathcal{D})$ denote the 2-category of lax functors, lax natural transformations and modifications, and  $[\mathcal{C},\mathcal{D}]_\mathrm{lnt}$ its full sub-2-category of (strict) 2-functors. We give two isomorphic constructions of a 2-category $\mathcal{C}\boxtimes\mathcal{D}$ satisfying $\textrm{Lax}(\mathcal{C},\textrm{Lax}(\mathcal{D},\mathcal{E})) \cong [\mathcal{C}\boxtimes \mathcal{D},\mathcal{E}]_\mathrm{lnt}$, hence generalising the case of the free distributive law $1\boxtimes 1$. We also discuss dual constructions.
\end{abstract}

\tableofcontents

\section{Introduction}

Monads (aka triples, standard constructions) are given by a category $C$, an endofunctor $F:C\rightarrow C$ and two natural transformations $\eta:1_C\Rightarrow F$ and $\mu:F^2\Rightarrow F$, satisfying unit and associativity axioms \cite{Lane1998}. Their use is ubiquitous and the most common one is describing a (possibly complicated) algebraic structure as Eilenberg-Moore (EM) algebras \cite{Lane1998} on a category of simpler ones. An EM algebra is given by a map $TX\rightarrow X$ compatible with $\mu$ and $\eta$. With algebra morphisms, they form a category $\mathrm{EM}(T)$. The full subcategory of $\mathrm{EM}(T)$ consisting of free algebras is (up to equivalence) usually denoted $\mathrm{KL}(T)$. A typical example is the Abelian group monad on the category of sets taking a set $S$ to the set of elements of the free Abelian group on $S$. 

A distributive law \cite{Beck1969} consists of two different monads on the same category satisfying a compatibility condition. Then their composite is a new monad. A typical example is the Abelian group monad together with the monoid monad producing the ring monad, hence the name.

Monads are in fact definable in an arbitrary bicategory $\mathcal{E}$ \cite{Street1972}, just by replacing words ``functor'' with arrow and natural transformation by 2-cell. For example, in a bicategory of spans, monads are precisely (small) categories \cite{Benabou1967}. A morphism between a monad $T$ on $X$ and $S$ on $Y$, consists of an arrow $X\xrightarrow{F}Y$ and a ``crossing'' 2-cell $S\circ F\xRightarrow{\sigma}F\circ T$ which is compatible with unit and multiplication for both monads. A morphism between monad morphisms $F$ and $G$, consists of a 2-cell $F\xRightarrow{\alpha}G$ compatible with crossing 2-cells. These form the 2-category of monads in $\mathcal{E}$, called $\mathrm{Mnd}(\mathcal{E})$. Now, a distributive law in $\mathcal{E}$ has a short description as a monad in $\mathrm{Mnd}(\mathcal{E})$. Various duals are expressible using dualities of 2-categories, for instance, the 2-category of comonads is defined as $\mathrm{Cmd}(\mathcal{E})=\mathrm{Mnd}(\mathcal{E}^\mathrm{co})^\mathrm{co}$, mixed distributive laws as $\mathrm{Cmd}(\mathrm{Mnd}(\mathcal{E}))$. Since objects of $\mathcal{E}$ are no longer categories, we have no access to their elements, and cannot form an $EM$-category; but we can use the 2-dimensional universal property of lax limit to obtain, if exists, an $\mathrm{EM}$-object $\mathrm{EM}(T)$, also denoted $C^T$. The main topic of \cite{Lack2002} is completion of $\mathcal{E}$ under these limits. Dually, lax colimits give $\mathrm{KL(T)}$, also denoted $C_T$.

The free monad \cite{Lawvere1969} is a 2-category $FM$ which classifies monads; that is, the 2-category of strict functors, lax natural transformations and modifications $[\mathrm{FM},\mathcal{E}]_\mathrm{lnt}$ is isomorphic to $\mathrm{Mnd}(\mathcal{E})$. It is given by the suspension of the opposite of the algebraist's category of simplices, $\Delta_\mathrm{a}^\mathrm{op}$ with ordinal sum as the monoidal structure. We will use it a lot, so we review its definition and some properties in Appendix \ref{sec:simpaths}. The free mixed distributive law ($\mathrm{FMDL}$) was constructed by Street \cite{Street2015}, and is a special case of the construction presented here.

A lax functor \cite{Benabou1967} (aka morphism) between bicategories generalises the notion of a (strict) 2-functor, by relaxing the conditions of preservation of the unit and composition of arrows. Instead, a lax functor $F:\mathcal{D}\rightarrow\mathcal{E}$ is equipped with comparison maps 
\begin{align*}
\eta_D:1_{FD}\Rightarrow F(1_D)\textrm{ and }\mu_{dd'}:F(d')\circ F(d)\Rightarrow F(d'\circ d)
\end{align*}
for each object $D$ of $\mathcal{D}$, and composable pair $(d,d')$ of arrows in $\mathcal{D}$. These are required to satisfy unit and associativity laws, and $\mu$ is required to be natural in $c$ and $c'$. The special case of $\mathcal{D}=1$, that is, if $\mathcal{D}$ has only one 0/1/2-cell, then giving a lax functor exactly corresponds to giving a monad in $\mathcal{E}$. A lax functor from the chaotic category\footnote{That is, the category having exactly one arrow in each hom.}
on a set $X$ corresponds to a category enriched in $\mathcal{E}$. Another example, lax functors from $\mathbb{I}(:=0\rightarrow 1)$ into $\mathrm{Span}$ correspond to choosing two categories and a module (aka profunctor, distributor) between them. Lax natural transformations $F\xRightarrow{\sigma}G$ between two such functors consist of arrows $FD\xrightarrow{\sigma_D}GD$, for each $D\in \mathcal{D}$, and $Gd\circ \sigma_D\xrightarrow{\sigma_d}\sigma_D'\circ Fd$, for each $D\xrightarrow{d}D'$ in $\mathcal{D}$, natural in $d$ and compatible with $\eta$ and $\mu$. Finally a modification $\sigma\xrightarrow{m}\tau$  consists of 2-cells $\sigma_D\xRightarrow{m_D}\tau_D$, for each $D$, compatible with $\sigma$. These form a 2-category $\mathrm{Lax}(\mathcal{D},\mathcal{E})$. The choice of directions gives an isomorphism of 2-categories $\mathrm{Lax}(1,\mathcal{E})\cong \mathrm{Mnd}(\mathcal{E})$, and by the definition of (free) distributive law ($\mathrm{FDL}$) we have $\mathrm{Lax}(1,\mathrm{Lax}(1,\mathcal{E}))\cong [\mathrm{FDL},\mathcal{E}]_\mathrm{lnt}$.

Our goal is, given 2-categories $\mathcal{C}$ and $\mathcal{D}$, to construct a 2-category $\mathcal{C} \boxtimes\mathcal{D}$ that is ``free'', in the sense that it strictifies the lax functors, so that
\begin{equation}\label{eq:mainiso}
\mathrm{Lax}(\mathcal{C},\mathrm{Lax}(\mathcal{D},\mathcal{E}))\cong [\mathcal{C} \boxtimes\mathcal{D},\mathcal{E}]_\mathrm{lnt}.
\end{equation}

The variables $C$, $c$, $\gamma$ used to describe cells in $\mathcal{C}$ (similarly for $D$, $d$ and $\delta$ in $\mathcal{D}$), have sources and targets according to the diagram
\ref{diag:notation}.
\begin{equation}\label{diag:notation}
\begin{tikzpicture}[
baseline=(current  bounding  box.center)]
\def \str {2.0}
\def \gmoff {0.14}
\node (C1) at (\str*1,0) {$C$};
\node (C2) at (\str*2.5,0) {$C'$};
\node (C3) at (\str*4,0) {$C''$};
\path[->,font=\scriptsize,>=angle 90]
(C1) edge [bend left] node[above] {$c$} (C2);
\path[->,font=\scriptsize,>=angle 90]
(C1) edge node[above right] {$\bar c$} (C2);
\path[->,font=\scriptsize,>=angle 90]
(C1) edge [bend right] node[below] {$\bar{\bar c}$} (C2);
\path[->,font=\scriptsize,>=angle 90]
(C2) edge [bend left] node[above] {$c'$} (C3);
\path[->,font=\scriptsize,>=angle 90]
(C2) edge node[above right] {$\bar c'$} (C3);
\path[->,font=\scriptsize,>=angle 90] 
(C2) edge [bend right] node[below] {$\bar{\bar c}'$} (C3);
\node at (\str*1.6,\str * \gmoff) {\scriptsize $\gamma$ {$\Downarrow$}};
\node at (\str*1.7,-\str * \gmoff) {\scriptsize $\bar\gamma$ {$\Downarrow$}};
\node at (\str*3.1,\str * \gmoff) {\scriptsize $\gamma'$ {$\Downarrow$}};
\node at (\str*3.2,-\str * \gmoff) {\scriptsize $\bar \gamma'$ {$\Downarrow$}};
\end{tikzpicture}
\end{equation}
Horizontal composition is denoted by $\circ$ and vertical by $\bullet$.

\section{Tensor product via computads}

We begin by fully unpacking the LHS of (\ref{eq:mainiso}), which involves familiar, but numerous axioms - there are eighteen axioms for an object (lax functor) $B$, five axioms for an arrow (lax natural transformation) $b:B\rightarrow B'$, and two axioms for a 2-cell (modification) $\beta:b\Rightarrow \bar b$. Then we review the definition of computads \cite{Street1976} which play the same role for 2-categories as graphs do for usual categories - they are part of a monadic adjunction. We then proceed to construct a computad $\mathcal{G}$ to give a convenient generator-relation description of the tensor product. 

\subsection{Unpacking} \label{sec:unpacking}
An object $B$ of $\textrm{Lax}(\mathcal{C},\textrm{Lax}(\mathcal{D},\mathcal{E}))$ assigns to each $C\in \mathcal{C}$ a lax functor $B C:\mathcal{D}\rightarrow \mathcal{E}$, which amounts to giving the following data in $\mathcal{E}$: 
\begin{itemize}
	\item for each $D$ an object $B CD\in \mathcal{E}$
	\item for each $d$ an arrow $B Cd:B CD\rightarrow B CD'$
	\item for each $\delta$ a 2-cell $B C\delta:B Cd\Rightarrow B C\bar d$, functorially
	\begin{align}
	BC1_d &= 1_{BCd}			\label{eq:functid}\\
	BC(\bar\delta\bullet \delta) &= BC\bar\delta \bullet BC\delta
	\end{align}
	\item{\hspace{-0.2cm}\tiny(\textbf{f1})} for each $D$ a unit comparison 2-cell $\eta_{BC1_D}:1_{B CD}\Rightarrow B C1_D$
	\item{\hspace{-0.2cm}\tiny(\textbf{f1})} for each composable pair $(d,d')$ a composition comparison 2-cell 
	$\mu_{BCdd'}:(B Cd')\circ (B C d)\Rightarrow (B Cd'\circ d)$,\newline
satisfying unit and associativity axioms,
\begin{align}
\mu
\bullet (1
\circ  \eta
) = &\,\,1
 = \mu
\bullet (\eta%
\circ  1
)\label{eq:unit1}\\
\mu\bullet (1 \circ  \mu) &= \mu \bullet (\mu \circ  1)
\label{eq:comp1}
\end{align}
together with a naturality condition,
\begin{equation}
\mu_{BC\bar d\bar d'}\bullet (BC\delta' \circ  BC\delta) = BC(\delta'\circ  \delta)\bullet\mu_{Cdd'}.
\end{equation}
\end{itemize}
Also, $B$ assigns to each $c:C\rightarrow C'$ a lax natural transformation
$Bc:BC\rightarrow B C'$ consisting of:
\begin{itemize}
	\item arrows $B cD: B CD\rightarrow B C'D$
	\item{\hspace{-0.2cm}\tiny(\textbf{t1})} 2-cells $\sigma_{B cd}: B C'd\circ B c D \Rightarrow B cD'\circ B C d$,\newline
	with the two axioms expressing compatibility with unit and composition, 
	\begin{align}
	\sigma\bullet (\eta \circ  1) &= 1\circ  \eta\label{eq:swapunit1}\\
	\sigma\bullet (\mu \circ  1) &= (1 \circ  \mu) \bullet (\sigma\circ  1)\bullet (1\circ  \sigma)\label{eq:swapcomp1}
	\end{align}
	and one expressing naturality, 
	\begin{equation}
	\sigma_{Bc\bar d}\bullet(BC'\delta\circ  1_{BcD}) = (1_{BcD'}\circ  BC\delta)\bullet\sigma_{Bcd}\,.
	\end{equation}
\end{itemize}
Finally, $B$ assigns (functorially) to each 2-cell $\gamma:c\rightarrow \bar c$ a modification $B\gamma :B c\Rightarrow B \bar c $, which in $\mathcal{E}$ means:
\begin{itemize}
\item 2-cells $B\gamma D:B cD\Rightarrow B \bar c D$,\newline
 satisfying the modification axiom,
\begin{equation}
\sigma_{B\bar cd}\bullet(1_{BC'd}\circ  B\gamma D) = (B\gamma D' \circ  1_{BCd})\bullet\sigma_{Bcd}
\end{equation}
and the functoriality condition
\begin{align}
B1_cD &= 1_{BcD}\\
B(\bar\gamma\bullet \gamma)D &= B\bar\gamma D \bullet B\gamma D\,. \label{eq:functOfC}
\end{align}
\end{itemize}
Being a lax functor, $B$ has to provide the unit and composition comparison modifications given by data:
\begin{itemize}
	\item{\hspace{-0.2cm}\tiny(\textbf{f2})} unit 2-cells $\eta_{B1_CD}:1_{B CD}\Rightarrow B 1_CD$
	\item{\hspace{-0.2cm}\tiny(\textbf{f2})} composition 2-cells $\mu_{Bcc'D}:(B c'D)\circ (B cD)\Rightarrow (B c'\circ cD)$\newline
which, in addition to the naturality condition
\begin{align}
\mu_{B\bar c\bar c'D}\bullet (B\gamma'D \circ  B\gamma D) =  B(\gamma'\circ  \gamma)D \bullet \mu_{Bcc'D}
\end{align}
and modification axiom,
\begin{align}
\sigma\bullet (1 \circ  \eta) &= \eta\circ  1\\
\sigma\bullet (1 \circ  \mu) &= (\mu\circ  1) \bullet (1\circ  \sigma)\bullet (\sigma\circ  1)\label{eq:swapcomp2}
\end{align}
satisfy the unit and associativity axioms (\ref{eq:unit1})-(\ref{eq:comp1}).
\end{itemize}

An arrow $ b :B\rightarrow B'$, being a lax transformation between lax functors $B$ and $B'$, assigns to each $C\in \mathcal{C}$ a lax transformation $ b  C:B C\rightarrow B' C$ and to each $c:C\rightarrow C'$ a modification $ \sigma_{bc}: B' c\circ b  C\Rightarrow b  C'\circ Bc$, which means the following data in $\mathcal{E}$:
\begin{itemize}
	\item 1-cells $ bCD:B CD\rightarrow B' CD \label{eq:bCD}$
	\item{\hspace{-0.2cm}\tiny(\textbf{t1})} 2-cells $ \sigma_{bCd}:B' Cd\circ b  CD\Rightarrow  b  CD'\circ BCd$
	\item{\hspace{-0.2cm}\tiny(\textbf{t2})} 2-cells $ \sigma_{bcD}:B' cD\circ b  CD\Rightarrow  b  C'D\circ BcD$,\newline
subject to naturality
\begin{align}
\sigma_{b\bar c D}\bullet(B'\gamma D \circ  1_{bCD}) &= (1_{bC'D}\circ  B\gamma D)\bullet\sigma_{bcD}\label{eq:swapnat1}\\
\sigma_{bC d}\bullet(B'C \delta \circ  1_{bCD}) &= (1_{bCD'}\circ  BC \delta)\bullet\sigma_{bCd} \label{eq:swapnat2}	
\end{align}
lax transformation
\begin{align}
\sigma\bullet (\eta \circ  1) &= 1\circ  \eta\label{eq:bsigmaeta}\\
\sigma\bullet (\mu \circ  1) &= (1 \circ  \mu) \bullet (\sigma\circ  1)\bullet (1\circ  \sigma)\label{eq:bsigmamu}
\end{align}
and a modification
\begin{equation}
(1\circ \sigma)\bullet(\sigma\circ  1)\bullet(1\circ \sigma)
=(\sigma\circ  1)\bullet(1\circ \sigma)\bullet(\sigma\circ  1)
\label{eq:swapyb}
\end{equation}
axioms.
\end{itemize}

A 2-cell $\beta: b \rightarrow \bar b$ in $\textrm{Lax}(\mathcal{C},\textrm{Lax}(\mathcal{D},\mathcal{E}))$, being a modification, assigns to each $C\in \mathcal{C}$ a modification $\beta C: b  C\Rightarrow \bar b C$, which in $\mathcal{E}$ means 
\begin{itemize}
	\item 2-cells $\beta CD: b  CD\Rightarrow \bar b CD$,
with modification axioms,
\begin{align}
\sigma_{\bar b c D}\bullet(1_{B'cD} \circ  \beta CD) &= (\beta C'D\circ  1_{BcD})\bullet\sigma_{bcD}\label{eq:modifc}\\
\sigma_{\bar b C d}\bullet(1_{B'Cd} \circ  \beta CD) &= (\beta CD'\circ  1_{BCd})\bullet\sigma_{bCd}\,.\label{eq:modifd}
\end{align}
\end{itemize}

\subsection{Symmetries}

Denote by $\mathrm{(Op)Lax}_\mathrm{(op)}(\mathcal{D},\mathcal{E})$ the 2-category of (op)lax functors (first op), (op)lax natural transformations (subscript op) and modifications.
\begin{prop}
There are isomorphisms:
\begin{align}
\mathrm{Lax}_\mathrm{op}(\mathcal{D},\mathcal{E})
&\cong
\mathrm{Lax}
(\mathcal{D}^\mathrm{op},\mathcal{E}^\mathrm{op})^\mathrm{op} \label{eq:dualOp}\\
\mathrm{OpLax}_\mathrm{op}(\mathcal{D},\mathcal{E})
&\cong
\mathrm{Lax}
(\mathcal{D}^\mathrm{co},\mathcal{E}^\mathrm{co})^\mathrm{co}\label{eq:dualCo}\\
\textrm{Lax}(\mathcal{C},\textrm{Lax}_\mathrm{op}(\mathcal{D},\mathcal{E}))
&\cong \textrm{Lax}_\mathrm{op}(\mathcal{D},\textrm{Lax}(\mathcal{C},\mathcal{E}))
\label{eq:ontStreetIdentity}\\
\textrm{Lax}(\mathcal{C},\textrm{OpLax}_\mathrm{op}(\mathcal{D},\mathcal{E}))&\cong \textrm{OpLax}_\mathrm{op}(\mathcal{D},\textrm{Lax}(\mathcal{C},\mathcal{E}))\,.
\label{eq:ontOplax}
\end{align}
\end{prop}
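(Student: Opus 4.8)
The plan is to treat all four isomorphisms uniformly, as instances of a single principle: each of the three strict dualities $(-)^\mathrm{op}$ (reverse $1$-cells), $(-)^\mathrm{co}$ (reverse $2$-cells) and their composite is a strict involutive $2$-functor, and pre- or post-composing the $\mathrm{Lax}(-,-)$ construction with them permutes the lax/oplax flavours while acting as the identity on underlying data once sources and targets are reinterpreted. Accordingly, in each case I would exhibit the comparison as this identity-on-data assignment and reduce the claim to checking that the defining data and axioms, as unpacked in Section~\ref{sec:unpacking}, are carried bijectively onto one another. Since composition in the $2$-categories involved is computed from the very same composites in $\mathcal{E}$, once the bijection on cells is in place its $2$-functoriality is immediate, so all the content lies in the direction-bookkeeping.

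For \eqref{eq:dualOp} I would start from a lax functor $F\colon\mathcal{D}^\mathrm{op}\to\mathcal{E}^\mathrm{op}$ and reinterpret its data in $\mathcal{E}$. As $(-)^\mathrm{op}$ reverses $1$-cells in both domain and codomain, the two reversals cancel on objects, arrows and $2$-cells, so the action of $F$ becomes an ordinary covariant action $\mathcal{D}\to\mathcal{E}$; tracing the composition comparison $\mu$ through the reversal of horizontal composition shows it lands in the \emph{lax} direction, so $F$ reinterprets as a genuine lax functor. A lax natural transformation between two such $F,G$ has components $\sigma_D\colon FD\to GD$ in $\mathcal{E}^\mathrm{op}$, that is $GD\to FD$ in $\mathcal{E}$, so its direction reverses --- which is exactly what the outer $(-)^\mathrm{op}$ records --- and its naturality $2$-cell reinterprets into the oplax direction, matching the subscript $\mathrm{op}$; modifications pass through unchanged. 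The isomorphism \eqref{eq:dualCo} is handled identically, except that $(-)^\mathrm{co}$ reverses $2$-cells: the unit and composition comparisons of a lax functor $\mathcal{D}^\mathrm{co}\to\mathcal{E}^\mathrm{co}$ reinterpret into the oplax direction, giving an oplax functor, the action on $2$-cells stays covariant because the two reversals again cancel, the transformations reinterpret as oplax while keeping their direction (since $(-)^\mathrm{co}$ fixes $1$-cells), and the outer $(-)^\mathrm{co}$ records the reversal of modifications.

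The essential isomorphism is Street's identity \eqref{eq:ontStreetIdentity}, and here I would unpack both sides completely into data and axioms living in $\mathcal{E}$, exactly as in Section~\ref{sec:unpacking} but now with the $\mathcal{D}$-direction comparison and naturality cells taken in the oplax sense on both sides. Both $\mathrm{Lax}(\mathcal{C},\mathrm{Lax}_\mathrm{op}(\mathcal{D},\mathcal{E}))$ and $\mathrm{Lax}_\mathrm{op}(\mathcal{D},\mathrm{Lax}(\mathcal{C},\mathcal{E}))$ then present the \emph{same} structure, namely a functor of two variables that is lax in $\mathcal{C}$ and oplax in $\mathcal{D}$: objects $BCD$, arrows $BCd$ and $BcD$, the crossing $2$-cells $\sigma$ for each variable, the unit and composition comparisons in each variable, and the single cubical compatibility \eqref{eq:swapyb}. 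This list is symmetric under interchanging the roles of $\mathcal{C}$ and $\mathcal{D}$ precisely because the $\mathcal{D}$-direction transformations are oplax on both sides, so the comparison isomorphism is the identity on the shared data. I expect the main obstacle to be concentrated here: there is no conceptual difficulty, but one must verify that the eighteen functor axioms, five transformation axioms and two modification axioms of Section~\ref{sec:unpacking} are matched symmetrically, and in particular that the swap axioms \eqref{eq:swapcomp1}, \eqref{eq:swapcomp2} and the cubical axiom \eqref{eq:swapyb} are genuinely self-dual under the interchange. A single misplaced direction would destroy the bijection, so the verification must be organised carefully, most cleanly around the three-dimensional array of generating cells.

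Finally, \eqref{eq:ontOplax} I would obtain either by repeating the symmetric unpacking of the previous paragraph with the $\mathcal{D}$-direction functor taken oplax, or, more economically, by substituting the co-duality \eqref{eq:dualCo} into \eqref{eq:ontStreetIdentity} applied to $\mathcal{E}^\mathrm{co}$ and using that $\mathrm{Lax}(\mathcal{C},-)$ intertwines with $(-)^\mathrm{co}$ in the manner already established for \eqref{eq:dualCo}. Either route reduces it to verifications already performed, so no new obstacle arises.
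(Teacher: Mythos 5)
Your strategy is the same as the paper's: unpack both sides into data and axioms in $\mathcal{E}$ and match them by tracking which cells reverse under each duality, so the comparison is the identity on underlying data. Two remarks. First, you anticipate having to verify all twenty-five axioms of Section \ref{sec:unpacking} case by case; the paper short-circuits this with the observation that once the directions of the \emph{data} (the cells marked \textbf{(f1)}, \textbf{(f2)}, \textbf{(t1)}, \textbf{(t2)}) are fixed, the axioms are determined uniquely, so only the data need be compared. Adopting that observation would remove the ``main obstacle'' you identify. Second, and more importantly, your description of the common structure underlying \eqref{eq:ontStreetIdentity} as ``a functor of two variables that is lax in $\mathcal{C}$ and oplax in $\mathcal{D}$'', and your earlier phrase about the ``$\mathcal{D}$-direction comparison \dots{} cells taken in the oplax sense'', misplace which cells reverse: in $\mathrm{Lax}_\mathrm{op}(\mathcal{D},\mathcal{E})$ the functors are still lax, so the unit and composition comparisons in the $\mathcal{D}$-direction (the \textbf{(f1)} cells) keep their direction, and only the transformation crossings (the \textbf{(t1)} cells, and \textbf{(t2)} after the variable swap) reverse. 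Reversing the \textbf{(f1)} cells as well is exactly the difference between \eqref{eq:ontStreetIdentity} and \eqref{eq:ontOplax}, so as literally stated your identification would prove the wrong isomorphism; your subsequent sentence attributing the symmetry to the $\mathcal{D}$-direction \emph{transformations} being oplax is the correct formulation and should replace the other two. With that correction the argument agrees with the paper's, including your derivation of \eqref{eq:ontOplax} from the other parts.
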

\begin{proof}
Data and axioms for the LHS of (\ref{eq:dualOp}) (resp. (\ref{eq:dualCo})) are obtained from the beginning of Section \ref{sec:unpacking} until the equation (\ref{eq:functOfC}), by ignoring the letter $B$ in all the names, and reversing the direction of 2-cells for data marked by \textbf{(t1)} (resp. \textbf{(f1)} or \textbf{(t1)}). On the other hand, the data and axioms for the RHS of (\ref{eq:dualOp}) (resp. (\ref{eq:dualCo})) have reversed sources and targets of arrows (resp. 2-cells), compared to the diagram (\ref{diag:notation}), but they also live in $\mathcal{E}^\mathrm{op}$ (resp. $\mathcal{E}^\mathrm{co}$), rather than $\mathcal{E}$; interpreted in $\mathcal{E}$, they have reversed  2-cells marked by (\textbf{t1}) (resp. (\textbf{f1}) or (\textbf{t1})). A possibly easier way to see this is to draw string diagrams in $\mathcal{E}^\mathrm{op}$ (resp. $\mathcal{E}^\mathrm{co}$), and then flip them horizontally (resp. vertically).

To prove (\ref{eq:ontStreetIdentity}), observe that the data and axioms in Section \ref{sec:unpacking}, with \textbf{(t1)} 2-cells reversed (LHS), and second and third letter in all labels formally swapped, corresponds to the same data and axioms when $C$ (resp. $c$, $\gamma$) is substituted for $D$ (resp. $d$, $\delta$), and vice versa, and then \textbf{(t2)} 2-cells are reversed (RHS).

Similarly, in (\ref{eq:ontOplax}) reversing \textbf{(f1)} and \textbf{(t1)} 2-cells, followed by swapping positions in labels, leads the same result as swapping variables and then reversing 2-cells marked by \textbf{(f2)} and \textbf{(t2)}.

Once the directions for data are fixed, all axioms are determined in a unique way, and there is no need to analyse them separately.
\end{proof}
\begin{cor}
There are isomorphism:
\begin{align}
\mathrm{OpLax}(\mathcal{D},\mathcal{E})
&\cong
\mathrm{Lax}
(\mathcal{D}^\mathrm{co\,op},\mathcal{E}^\mathrm{co\,op})^\mathrm{co\,op}\\
\textrm{OpLax}(\mathcal{C},\textrm{Lax}_\mathrm{op}(\mathcal{D},\mathcal{E}))&\cong \textrm{Lax}_\mathrm{op}(\mathcal{D},\textrm{OpLax}(\mathcal{C},\mathcal{E}))\,.
\end{align}
\end{cor}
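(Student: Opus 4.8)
The plan is to derive both isomorphisms formally from the four isomorphisms of the Proposition, exploiting the fact that every 2-category appearing is assembled from the \emph{same} unpacked data of Section~\ref{sec:unpacking}, with only the directions of certain comparison 2-cells altered. As recorded at the end of the Proposition's proof, once those directions are fixed all axioms are forced, so no axiom need be checked separately; the entire argument is bookkeeping with the three duality 2-functors, which commute and satisfy $((-)^\mathrm{co})^\mathrm{op}=(-)^\mathrm{co\,op}$, $((-)^\mathrm{co\,op})^\mathrm{co}=(-)^\mathrm{op}$ and $((-)^\mathrm{co\,op})^\mathrm{op}=(-)^\mathrm{co}$. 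I use these collapses throughout to simplify iterated duals.

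For the first isomorphism, observe that $\mathrm{OpLax}(\mathcal{D},\mathcal{E})$ differs from $\mathrm{Lax}(\mathcal{D},\mathcal{E})$ only by reversal of the functor-comparison cells \textbf{(f1)}/\textbf{(f2)}, the transformation cells \textbf{(t1)}/\textbf{(t2)} being untouched. Since the Proposition realises reversal of \textbf{(t1)} alone by op-duality (\ref{eq:dualOp}) and reversal of \textbf{(f1)} and \textbf{(t1)} together by co-duality (\ref{eq:dualCo}), reversal of \textbf{(f1)} alone is their composite. Concretely I invoke the oplax analogue of (\ref{eq:dualOp}), namely $\mathrm{OpLax}(\mathcal{D},\mathcal{E})\cong\mathrm{OpLax}_\mathrm{op}(\mathcal{D}^\mathrm{op},\mathcal{E}^\mathrm{op})^\mathrm{op}$, which holds by the identical direction-of-data argument because that argument concerns only the \textbf{(t1)} cells and is indifferent to the flavour of the functors; applying (\ref{eq:dualCo}) to the inner term and collapsing $(\mathcal{D}^\mathrm{op})^\mathrm{co}=\mathcal{D}^\mathrm{co\,op}$ together with the outer $((-)^\mathrm{co})^\mathrm{op}=(-)^\mathrm{co\,op}$ produces $\mathrm{Lax}(\mathcal{D}^\mathrm{co\,op},\mathcal{E}^\mathrm{co\,op})^\mathrm{co\,op}$.

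For the second isomorphism I reduce to the interchange identity (\ref{eq:ontOplax}) by conjugating with dualities. First I rewrite the outer $\mathrm{OpLax}(\mathcal{C},-)$ on the left by the first isomorphism, obtaining $\mathrm{Lax}(\mathcal{C}^\mathrm{co\,op},(-)^\mathrm{co\,op})^\mathrm{co\,op}$; the coop-dual of the inner $\mathrm{Lax}_\mathrm{op}(\mathcal{D},\mathcal{E})$, computed by (\ref{eq:dualOp}) then (\ref{eq:dualCo}), is $\mathrm{OpLax}_\mathrm{op}(\mathcal{D}^\mathrm{co\,op},\mathcal{E}^\mathrm{co\,op})$. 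The bracketed term is then exactly the left-hand side of (\ref{eq:ontOplax}) with $\mathcal{C},\mathcal{D},\mathcal{E}$ replaced by $\mathcal{C}^\mathrm{co\,op},\mathcal{D}^\mathrm{co\,op},\mathcal{E}^\mathrm{co\,op}$, which swaps it to $\mathrm{OpLax}_\mathrm{op}(\mathcal{D}^\mathrm{co\,op},\mathrm{Lax}(\mathcal{C}^\mathrm{co\,op},\mathcal{E}^\mathrm{co\,op}))$. Finally I unwind the remaining outer $(-)^\mathrm{co\,op}$: applying (\ref{eq:dualCo}) then (\ref{eq:dualOp}) restores $\mathrm{OpLax}_\mathrm{op}(\mathcal{D}^\mathrm{co\,op},-)^\mathrm{co\,op}$ to $\mathrm{Lax}_\mathrm{op}(\mathcal{D},(-)^\mathrm{co\,op})$, and the first isomorphism read backwards identifies $\mathrm{Lax}(\mathcal{C}^\mathrm{co\,op},\mathcal{E}^\mathrm{co\,op})^\mathrm{co\,op}$ with $\mathrm{OpLax}(\mathcal{C},\mathcal{E})$, leaving precisely $\mathrm{Lax}_\mathrm{op}(\mathcal{D},\mathrm{OpLax}(\mathcal{C},\mathcal{E}))$.

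The only genuine work is the duality bookkeeping, and that is where I expect the main obstacle: one must verify that the iterated duals collapse as claimed and, crucially, that feeding the arguments $\mathcal{C}^\mathrm{co\,op},\mathcal{D}^\mathrm{co\,op},\mathcal{E}^\mathrm{co\,op}$ into (\ref{eq:ontOplax}) yields exactly the configuration that unwinds to the stated target rather than to a neighbouring variant. Because each step is an identity on the underlying cells once directions are pinned down, by the remark closing the Proposition no axioms need be checked by hand.
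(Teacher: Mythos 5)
Your derivation is correct and matches the paper's (implicit) intent: the corollary is stated without proof precisely because it follows from the Proposition by the duality bookkeeping you carry out, and your iterated-dual collapses and the substitution into (\ref{eq:ontOplax}) all check out. The one step not literally contained in the Proposition is the ``oplax analogue'' of (\ref{eq:dualOp}) (and equivalently the $\mathrm{Lax}_\mathrm{op}$/$\mathrm{OpLax}$ analogue of (\ref{eq:dualCo})), but you correctly identify that the direction-of-data argument in the Proposition's proof is indifferent to the functor flavour, which is exactly the paper's own justification style, so no gap remains.
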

\begin{cor}
There are isomorphism:
\begin{align}
[\mathcal{D},\mathcal{E}]_\mathrm{ont}
&\cong
[\mathcal{D}^\mathrm{op},\mathcal{E}^\mathrm{op}]_\mathrm{lnt}^\mathrm{op} \label{eq:dualOpPs}\\
[\mathcal{D},\mathcal{E}]_\mathrm{ont}
&\cong
[\mathcal{D}^\mathrm{co},\mathcal{E}^\mathrm{co}]_\mathrm{lnt}^\mathrm{co}\label{eq:dualCoPs}\\
[\mathcal{C},[\mathcal{D},\mathcal{E}]_\mathrm{ont}]_\mathrm{lnt}
&\cong 
[\mathcal{D},[\mathcal{C},\mathcal{E}]_\mathrm{lnt}]_\mathrm{ont}\,.
\label{eq:ontPsIdent}
\end{align}
\end{cor}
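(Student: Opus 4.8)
The plan is to obtain each of the three isomorphisms by restricting the corresponding isomorphism from the Proposition to the full sub-2-category spanned by the strict $2$-functors. Recall that, by definition, $[\mathcal{D},\mathcal{E}]_\mathrm{lnt}$ and $[\mathcal{D},\mathcal{E}]_\mathrm{ont}$ are the full sub-2-categories of $\mathrm{Lax}(\mathcal{D},\mathcal{E})$ and $\mathrm{Lax}_\mathrm{op}(\mathcal{D},\mathcal{E})$ respectively, on the strict functors; since these inclusions are fully faithful on $1$- and $2$-cells, it suffices to track what each isomorphism does on objects.

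For (\ref{eq:dualOpPs}) I would restrict (\ref{eq:dualOp}). That isomorphism reverses only the \textbf{(t1)} $2$-cells, which belong to the natural-transformation data, and leaves the functor comparison cells \textbf{(f1)} untouched. A lax functor is strict exactly when its \textbf{(f1)} comparisons $\eta,\mu$ are identities, and this condition is transported verbatim, so strict functors correspond to strict functors. The strict-object full sub-2-category of $\mathrm{Lax}_\mathrm{op}(\mathcal{D},\mathcal{E})$ is $[\mathcal{D},\mathcal{E}]_\mathrm{ont}$, while that of $\mathrm{Lax}(\mathcal{D}^\mathrm{op},\mathcal{E}^\mathrm{op})^\mathrm{op}$ is $[\mathcal{D}^\mathrm{op},\mathcal{E}^\mathrm{op}]_\mathrm{lnt}^\mathrm{op}$, since passing to $(-)^\mathrm{op}$ does not change the class of objects; this yields (\ref{eq:dualOpPs}). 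For (\ref{eq:dualCoPs}) I would restrict (\ref{eq:dualCo}) in the same way. Here the isomorphism reverses the \textbf{(f1)} comparisons, but reversing an identity $2$-cell again gives an identity, so strictness is preserved; moreover the strict-object full sub-2-category of $\mathrm{OpLax}_\mathrm{op}(\mathcal{D},\mathcal{E})$ is once more $[\mathcal{D},\mathcal{E}]_\mathrm{ont}$, a strict functor being simultaneously lax and oplax.

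The third isomorphism (\ref{eq:ontPsIdent}) is the substantive one, obtained by restricting (\ref{eq:ontStreetIdentity}) to the ``doubly strict'' objects. Inside $\mathrm{Lax}(\mathcal{C},\mathrm{Lax}_\mathrm{op}(\mathcal{D},\mathcal{E}))$, the full sub-2-category $[\mathcal{C},[\mathcal{D},\mathcal{E}]_\mathrm{ont}]_\mathrm{lnt}$ consists of those $B$ whose outer comparisons \textbf{(f2)} and whose inner comparisons \textbf{(f1)} are all identities; inside $\mathrm{Lax}_\mathrm{op}(\mathcal{D},\mathrm{Lax}(\mathcal{C},\mathcal{E}))$, the full sub-2-category $[\mathcal{D},[\mathcal{C},\mathcal{E}]_\mathrm{lnt}]_\mathrm{ont}$ is cut out by the same two identity conditions. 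What must be checked is that the bijection of data underlying (\ref{eq:ontStreetIdentity}) --- which only swaps the variables $C\leftrightarrow D$ and reverses the transformation cells \textbf{(t1)} and \textbf{(t2)} --- carries the \textbf{(f2)} data to the \textbf{(f2)} data and the \textbf{(f1)} data to the \textbf{(f1)} data after the index swap, never altering these comparison cells. Granting this, ``$B$ strict and every $BC$ strict'' corresponds precisely to ``$A$ strict and every $AD$ strict'', and the fully faithful inclusions identify the hom-2-categories, so restriction delivers (\ref{eq:ontPsIdent}).

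The main obstacle is this last verification: one must confirm that under the label-swap the outer comparisons of the left-hand side match the outer comparisons of the right-hand side, and inner to inner, rather than being exchanged or reversed. Concretely, $\eta_{B1_CD}$ and $\mu_{Bcc'D}$ (indexed by $\mathcal{C}$-data and a $\mathcal{D}$-object) must land on the outer comparisons of the $\mathcal{D}$-indexed functor $A$ after substituting $C\leftrightarrow D$, while $\eta_{BC1_D}$ and $\mu_{BCdd'}$ must land on the inner $\mathcal{C}$-comparisons of each $AD$. This is a routine index-bookkeeping check once the directions are fixed, exactly in the spirit of the closing remark of the Proposition's proof, and it requires no computation beyond what the Proposition already supplies.
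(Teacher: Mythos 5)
Your proof is correct and matches the paper's approach: the paper states this corollary without proof, as the immediate restriction of the Proposition's isomorphisms (\ref{eq:dualOp}), (\ref{eq:dualCo}) and (\ref{eq:ontStreetIdentity}) to the full sub-2-categories of strict 2-functors, which is exactly the argument you give. One minor quibble: under the paper's positional tagging the label-swap in (\ref{eq:ontStreetIdentity}) interchanges the \textbf{(f1)} and \textbf{(f2)} comparison families rather than ``carrying \textbf{(f1)} to \textbf{(f1)} and \textbf{(f2)} to \textbf{(f2)}'' as you first state, but your subsequent concrete description (outer $\mathcal{C}$-indexed comparisons of $B$ land on outer $\mathcal{D}$-indexed comparisons of $A$, inner on inner) is the right one, and since double strictness requires \emph{all} comparisons to be identities the conclusion is unaffected either way.
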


\subsection{Reviewing computads} \label{sec:computads}
The content of this part is taken from \cite{Street1976}. We describe the major ideas and leave out the details. 
\begin{defn}(\cite{Street1976}, with a technical modification\footnote{We take all paths between two objects to be the nodes of $\mathcal{G}(G,G')$; that is, $\mathcal{G}(G,G')_0=(\mathcal{F}|\mathcal{G}|)(G,G')$.})
A computad $\mathcal{G}$ consists of a graph $|\mathcal{G}|$ (providing a set of objects $|\mathcal{G}|_0$ and a set of generating arrows $|\mathcal{G}|_1$), and for each pair of objects $G,G'\in |\mathcal{G}|_0$ a graph $\mathcal{G}(G,G')$ with a set nodes\footnote{$\mathcal{F}|\mathcal{G}|$ is the free category on a graph $|\mathcal{G}|$}
$\mathcal{G}(G,G')_0=(\mathcal{F}|\mathcal{G}|)(G,G')$ and a set of edges denoted $\mathcal{G}(G,G')_1$ (providing generating 2-cells).
\end{defn}
A computad morphism assigns all the data, respecting sources and targets, forming a category $\mathrm{Cmp}$.

There is a free 2-category $\mathcal{F}\mathcal{G}$ on the computad $\mathcal{G}$ that has the same objects as $\mathcal{G}$. Arrows between $G$ and $G'$ are ``paths'' between $G$ and $G'$; that is, elements of $\mathcal{G}(G,G')_0$. To define 2-cells, it is not enough to take the free category on $\mathcal{G}(G,G')$ since it does not take whiskering into account. Instead, consider the set of whiskered generating 2-cells
\begin{align*}
\mathcal{G}^1(G,G')=\{(p,\alpha,p')|&p\in\mathcal{G}(G,X)_0,\\
	&\alpha\in\mathcal{G}(X,X')_1,\\
	&p'\in\mathcal{G}(X',G')_0\}\,.
\end{align*}
Finally, to impose the middle of four interchange, take the set of whiskered pairs
\begin{align*}
\mathcal{G}^2(G,G')=\{(p,\alpha,p',\alpha',p'')|
	&p\in\mathcal{G}(G,X)_0,\\
	&\alpha\in\mathcal{G}(X,X')_1,\\
	&p'\in\mathcal{G}(X',X'')_0\\
	&\alpha'\in\mathcal{G}(X'',X''')_1,\\
	&p''\in\mathcal{G}(X''',G')_0\}
\end{align*}
and form a coequalizer in $\mathrm{Cat}$ to obtain the hom $(\mathcal{F}\mathcal{G})(G,G')$ 
\begin{align}
\mathcal{F}\mathcal{G}^2(G,G')\rightrightarrows \mathcal{F}\mathcal{G}^1(G,G')\rightarrow
(\mathcal{F}\mathcal{G})(G,G')
\end{align}
where the two parallel arrows are the two obvious ways to compose whiskered $\alpha$ with whiskered $\alpha'$; see \cite{Street1976} for details and the rest of the construction.

Given a 2-category $\mathcal{E}$, the underlying computad $\mathcal{U}\mathcal{E}$ has the underlying graph obtained from the underlying category of $\mathcal{E}$; that is, $|\mathcal{U}\mathcal{E}|=\mathcal{U}|\mathcal{E}|$, and the hom graphs have edges $(\mathcal{U}\mathcal{E})(E,E')(p,p')=\mathcal{E}(E,E')(\circ p,\circ \bar p)$, where $\circ p$ denotes the arrow in $\mathcal{E}$ obtained by composing the path $p$ in $\mathcal{E}$. Assignments $\mathcal{F}$ and $\mathcal{U}$ extend to morphisms and form an adjunction, giving a bijection between arrows in $\mathrm{Cmp}$ and $\textrm{2-Cat}$
\begin{align}\label{key}
T:\mathcal{G}\rightarrow\mathcal{U}\mathcal{E}\,\,
\pgetsLR \,\,
\hat T:\mathcal{F}\mathcal{G}\rightarrow\mathcal{E}\,.
\end{align}

Intuitively, the 2-category $\mathcal{F}\mathcal{U}\mathcal{E}$ is the 2-category of pasting diagrams in $\mathcal{E}$, and the counit of the adjunction is the operation of actual pasting to obtain a (2-)cell in $\mathcal{E}$.

\subsection{The tensor product computad}

The goal is to construct a computad $\mathcal{G}$ which has data analogous to the one in Section \ref{sec:unpacking}, and then to impose further identification of 2-cells in $\mathcal{F}\mathcal{G}$, analogous to the axioms (\ref{eq:functid})-(\ref{eq:swapcomp2}).
Consider the computad $\mathcal{G}$ defined by the following data:
\begin{itemize}
	\item a set $|\mathcal{G}|_0=\mathrm{Ob}\mathcal{C}\times \mathrm{Ob}\mathcal{D}$ of nodes, whose elements are denoted $C\boxtimes D$
	\item the set $|\mathcal{G}|_1((C,D),(C',D'))$ of edges consists of arrows in $\mathcal{C}(C,C')$ if $D=D'$, denoted $c\boxtimes D$, and arrows in $\mathcal{D}(D,D')$ if $C=C'$, denoted $C\boxtimes d$, otherwise it is empty. The concatenation of $c\boxtimes D$ and $C'\boxtimes d$, as an arrow in the free category on $|\mathcal{G}|$, will be denoted by
	  $\{ C\boxtimes D \xrightarrow{c\boxtimes D}{C'\boxtimes D} \xrightarrow{C'\boxtimes d}{C'\boxtimes D'}\}$, and the empty path on $C\boxtimes D$ by $\{C\boxtimes D\}$. When the meaning is clear from the context we omit the tensor product character. A concise way of expressing the collection of edges is as a disjoint union
	  \begin{equation}
	  	|\mathcal{G}|_1((C,D),(C',D'))=\mathcal{C}(C,C')\times \delta_{DD'}+\delta_{CC'}\times \mathcal{D}(D,D'),
	  \end{equation}
	  with $\delta_{XY}$ being an empty set when $X\neq Y$ and singleton $\{X\}$ when $X=Y$.
	\item 2-cells
	\begin{itemize}
		\item for each object $C$ of $\mathcal{C}$ and 2-cell $\delta :d\Rightarrow \bar d$ in $\mathcal{D}$,
		\begin{equation}
		C\boxtimes \delta:\{  CD \xrightarrow{Cd} CD'\} \Rightarrow \{  CD \xrightarrow{C \bar d} CD'\}
		\end{equation}
		
		\item for each object $D$ of $\mathcal{D}$ and 2-cell $\gamma :c\Rightarrow \bar c$ in $\mathcal{C}$,
		\begin{equation}
		\gamma \boxtimes D:\{  CD \xrightarrow{cD} C'D\} \Rightarrow \{  CD \xrightarrow{\bar c D} C'D\}
		\end{equation}
		\item{\hspace{-0.2cm}\tiny(\textbf{f1})} for each $(C,D)\in |\mathcal{G}|_0$, the unit comparisons
		\begin{align}
		\mathbf{id}_{C1_D}&: \{CD\}\Rightarrow \{ CD \xrightarrow{C 1_D} CD\}
		\end{align}
		\item{\hspace{-0.2cm}\tiny(\textbf{f2})} for each $(C,D)\in |\mathcal{G}|_0$, the unit comparisons
		\begin{align}
		\mathbf{id}_{1_CD}&: \{CD\}\Rightarrow \{ CD \xrightarrow{1_C D} CD \}
		\end{align}
		\item{\hspace{-0.2cm}\tiny(\textbf{f1})} for each $C\in \mathcal{C}$ and composable pair $(d,d')$ in $\mathcal{D}$, a composition comparison
		\begin{align}
		\mathbf{comp}_{Cdd'}&: \{ CD\xrightarrow{C d} CD' \xrightarrow{C d'} CD'' \}
		\Rightarrow \{ CD\xrightarrow{C\boxtimes (d'\circ d)} CD'' \}
		\end{align}
		\item{\hspace{-0.2cm}\tiny(\textbf{f2})} for each $D\in \mathcal{D}$ and composable pair $(c,c')$ in $\mathcal{C}$, a composition comparison
		\begin{align}
		\mathbf{comp}_{cc'D}&: \{ CD\xrightarrow{c D} C'D \xrightarrow{c' D} C''D \}
		\Rightarrow \{ CD\xrightarrow{(c'\circ c) \boxtimes D} C''D \}
		\end{align}
		\item{\hspace{-0.2cm}\tiny(\textbf{t1})} for each pair of 1-cells $(c,d)$,
		\begin{equation}
		\mathbf{swap}_{cd}:\{ CD \xrightarrow{cD} C'D \xrightarrow{C'd} C'D' \}
			\Rightarrow 
		\{ CD \xrightarrow{Cd} CD' \xrightarrow{cD'} C'D' \}.
		\end{equation}
	\end{itemize}
\end{itemize}

The 2-category $\mathcal{C} \boxtimes_{cmp} \mathcal{D}$ is obtained from $\mathcal{F}\mathcal{G}$, the free 2-category on the computad $\mathcal{G}$, by imposing identifications:
\begin{itemize}
	\item preservation of identity 2-cells
		\begin{align}
		C\boxtimes 1_d &= 1_{C\boxtimes d}\label{eq:TensorId1} \\
		1_c\boxtimes D &= 1_{c\boxtimes D}\label{eq:TensorId2}
		\end{align}
	\item distributivity of $\boxtimes$ over vertical composition
	\begin{align}
	(C\boxtimes \delta')\bullet (C\boxtimes \delta) &= C\boxtimes (\delta'\bullet \delta) \label{eq:TensorDist1} \\
	(\gamma' \boxtimes D)\bullet (\gamma \boxtimes D) &= (\gamma'\bullet \gamma)\boxtimes D \label{eq:TensorDist2}
	\end{align}
	\item compatibility with the composition comparison 2-cells
	\begin{align}
	\mathbf{comp}_{C\bar d\bar d'}\bullet (C\boxtimes \delta' \circ  C\boxtimes \delta) &= C\boxtimes (\delta'\circ  \delta)\bullet\mathbf{comp}_{Cdd'}\label{eq:TensorComp1}\\
	\mathbf{comp}_{\bar c\bar c'D}\bullet (\gamma' \boxtimes D \circ  \gamma\boxtimes D) &=  (\gamma'\circ  \gamma)\boxtimes D \bullet \mathbf{comp}_{cc'D}\label{eq:TensorComp2}
	\end{align}
	\item compatibility with the swapping 2-cells 
	\begin{align}
	\mathbf{swap}_{\bar c\bar d}\bullet (C'\boxtimes \delta \circ  \gamma\boxtimes D)&=
	(\gamma \boxtimes D'\circ  C\boxtimes \delta)\bullet\mathbf{swap}_{cd} \label{eq:TensorSwap}
	\end{align}
	\item unit and associativity laws
	\begin{align}
		\mathbf{comp}\bullet (1 \circ  \mathbf{id}) = 1 &\,\,\mathrm{\&}  \,\, \mathbf{comp}\bullet (\mathbf{id} \circ  1)=1\label{eq:TensorUnit}\\
		\mathbf{comp}\bullet (\mathbf{comp} \circ  1)&=
		\mathbf{comp}\bullet (1 \circ  \mathbf{comp}) \label{eq:TensorAssoc}
	\end{align}
	\item compatibility of swapping with unit and composition
	\begin{align}
		\mathbf{swap}\bullet (1 \circ  \mathbf{id}) &= \mathbf{id}\circ  1\label{eq:TensorId1Swap}\\
		\mathbf{swap}\bullet (\mathbf{id} \circ  1) &= 1\circ  \mathbf{id}\label{eq:Tensor1IdSwap}\\
		\mathbf{swap}\bullet (1 \circ  \mathbf{comp})
		&= (\mathbf{comp}\circ  1) \bullet (1\circ \mathbf{swap})\bullet (\mathbf{swap}\circ  1)\label{eq:TensorComp1Swap}\\
		\mathbf{swap}\bullet (\mathbf{comp} \circ  1) &= (1 \circ  \mathbf{comp}) \bullet (\mathbf{swap}\circ  1)\bullet (1\circ  \mathbf{swap})\,.\label{eq:Tensor1CompSwap}
		\end{align}
\end{itemize}

\begin{prop}\label{prop:compIso}
Let $\mathcal{C}$, $\mathcal{D}$ and $\mathcal{E}$ be 2-categories, $\mathcal{C} \boxtimes_{cmp} \mathcal{D}$ the 2-category defined above, then there is an isomorphism
\begin{equation}\label{eq:compIso}
\mathrm{Lax}(\mathcal{C},\mathrm{Lax}(\mathcal{D},\mathcal{E}))\cong [\mathcal{C} \boxtimes_{\mathrm{cmp}} \mathcal{D},\mathcal{E}]_\mathrm{lnt}\,.
\end{equation}
\end{prop}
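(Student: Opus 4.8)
The plan is to match, dimension by dimension, the explicit description of the left-hand side from Section \ref{sec:unpacking} with the generator--relation description of the right-hand side, using the adjunction bijection (\ref{key}). For objects I would argue as follows. Since $\mathcal{C}\boxtimes_{cmp}\mathcal{D}$ is the quotient of $\mathcal{F}\mathcal{G}$ by the relations (\ref{eq:TensorId1})--(\ref{eq:Tensor1CompSwap}), a strict $2$-functor $B:\mathcal{C}\boxtimes_{cmp}\mathcal{D}\to\mathcal{E}$ is precisely a $2$-functor $\mathcal{F}\mathcal{G}\to\mathcal{E}$ identifying the two sides of each of those relations. By (\ref{key}) this is in turn the same as a computad morphism $\mathcal{G}\to\mathcal{U}\mathcal{E}$ whose values satisfy the corresponding equations in $\mathcal{E}$. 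Reading off the generators listed in the definition of $\mathcal{G}$, such a morphism is exactly a choice of objects $BCD$, arrows $BcD$ and $BCd$, action $2$-cells $BC\delta$ and $B\gamma D$, and comparison $2$-cells $\eta$, $\mu$, $\sigma$ tagged \textbf{(f1)}, \textbf{(f2)}, \textbf{(t1)}; and the relations (\ref{eq:TensorId1})--(\ref{eq:Tensor1CompSwap}) are, one by one, the axioms (\ref{eq:functid})--(\ref{eq:swapcomp2}) of the unpacking. This reproduces exactly the data and axioms for an object of $\mathrm{Lax}(\mathcal{C},\mathrm{Lax}(\mathcal{D},\mathcal{E}))$, giving the bijection on objects.

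For the higher cells I would use that a lax natural transformation, and a modification, between $2$-functors out of $\mathcal{C}\boxtimes_{cmp}\mathcal{D}$ is determined by its components on the generating cells of $\mathcal{G}$. Because the source and target $2$-functors are strict, the composition and unit laws force the value of a lax transformation $\theta$ on a concatenated path to be the pasting of its values on the factors, and its value on an empty path to be an identity; hence $\theta$ is determined by the $2$-cells $\sigma_{bcD}$ and $\sigma_{bCd}$ assigned to the generating arrows $cD$ and $Cd$, matching the \textbf{(t1)}, \textbf{(t2)} data of an arrow $b$. The naturality axiom for $\theta$ is closed under pasting and whiskering, so it need only be imposed on the generating $2$-cells: against $\gamma\boxtimes D$ and $C\boxtimes\delta$ it becomes (\ref{eq:swapnat1})--(\ref{eq:swapnat2}); against the comparisons $\mathbf{id}$ and $\mathbf{comp}$, in both the \textbf{(f1)} and \textbf{(f2)} variants, it becomes (\ref{eq:bsigmaeta})--(\ref{eq:bsigmamu}); and against $\mathbf{swap}$ it becomes (\ref{eq:swapyb}). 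A modification is likewise determined by its object-components $\beta CD$, and its compatibility axiom, imposed on the generators $cD$ and $Cd$, reduces to (\ref{eq:modifc})--(\ref{eq:modifd}). It then remains to check that these three bijections respect vertical and horizontal composites, whence they assemble into an isomorphism of $2$-categories.

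The main obstacle is the reduction used throughout the second step: that a lax transformation, or a modification, on $\mathcal{C}\boxtimes_{cmp}\mathcal{D}$ really is \emph{freely} and \emph{uniquely} pinned down by its generator-components, with its defining equations collapsing precisely onto the generating $2$-cells and contributing nothing further. Verifying this amounts to two things --- that every instance of an axiom on an arbitrary cell is a formal consequence of the generator-level instances, via the computad presentation of $\mathcal{F}\mathcal{G}$ together with closure of the axioms under composition, and that each generator-level instance reproduces exactly one of (\ref{eq:swapnat1})--(\ref{eq:swapyb}) and (\ref{eq:modifc})--(\ref{eq:modifd}), with none omitted or duplicated. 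The bookkeeping is long but mechanical; the generators of $\mathcal{G}$ and the relations (\ref{eq:TensorId1})--(\ref{eq:Tensor1CompSwap}) have been arranged so that the correspondence is term-by-term.
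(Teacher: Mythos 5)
Your object-level argument coincides with the paper's: both identify strict $2$-functors $\mathcal{C}\boxtimes_{cmp}\mathcal{D}\to\mathcal{E}$ with computad maps $\mathcal{G}\to\mathcal{U}\mathcal{E}$ whose induced $2$-functor respects (\ref{eq:TensorId1})--(\ref{eq:Tensor1CompSwap}), via the adjunction (\ref{key}), and then read off the data and axioms of Section \ref{sec:unpacking}. For the $1$- and $2$-cells, however, you take a genuinely different route. You argue directly that a lax natural transformation (resp.\ modification) between strict $2$-functors out of a computad-presented $2$-category is freely determined by its components on generating arrows (resp.\ objects), with the naturality and coherence axioms needing to be imposed only on generating $2$-cells (resp.\ arrows) because those axioms are closed under pasting and whiskering; your matching of the generator-level instances to (\ref{eq:swapnat1})--(\ref{eq:swapyb}) and (\ref{eq:modifc})--(\ref{eq:modifd}) is correct (e.g.\ naturality against $\mathbf{swap}$ is exactly the Yang--Baxter-type axiom (\ref{eq:swapyb})). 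The paper avoids this ``determined by generators'' lemma altogether by currying: using (\ref{eq:ontPsIdent}) it identifies arrows (resp.\ $2$-cells) of $[\mathcal{F}\mathcal{G},\mathcal{E}]_\mathrm{lnt}$ with strict $2$-functors $\mathcal{F}\mathcal{G}\to\mathcal{E}^{\mathbb{I}}$ (resp.\ $\mathcal{F}\mathcal{G}\to\mathcal{E}^{\mathbb{D}}$), where $\mathcal{E}^{\mathcal{J}}=[\mathcal{J},\mathcal{E}]_\mathrm{ont}$, and then reapplies the object-level computad argument with the new target; the axioms (\ref{eq:swapnat1})--(\ref{eq:swapyb}) and (\ref{eq:modifc})--(\ref{eq:modifd}) then appear simply as the conditions that the assigned cells are genuine $1$- and $2$-cells of $\mathcal{E}^{\mathbb{I}}$ or $\mathcal{E}^{\mathbb{D}}$. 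The paper's route absorbs the bookkeeping you flag as your ``main obstacle'' into the universal property of $\mathcal{F}\dashv\mathcal{U}$ and the exponential law (\ref{eq:ontPsIdent}), at the price of having established that law first; your route is more elementary and self-contained, at the price of the generator-reduction verification, which, as you note, is mechanical but must actually be carried out. Both arguments are sound.
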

\begin{proof}
The data for $\mathcal{G}$ and identifications when forming $\mathcal{C} \boxtimes_{cmp} \mathcal{D}$ correspond exactly to data and laws (\ref{eq:functid})-(\ref{eq:swapcomp2}) for $B\in \textrm{Lax}(\mathcal{C},\textrm{Lax}(\mathcal{D},\mathcal{E}))$ in the Section \ref{sec:unpacking}. So, giving $B$ corresponds to giving a computad map $B_{\mathrm{cmp}}:\mathcal{G}\rightarrow\mathcal{U}\mathcal{E}$ such that the strict 2-functor $\hat{B}_{\mathrm{cmp}}:\mathcal{F}\mathcal{G}\rightarrow\mathcal{E}$ respects the identifications (\ref{eq:TensorId1})-(\ref{eq:Tensor1CompSwap}), which corresponds to giving a strict 2-functor $\hat{B}:\mathcal{C}\boxtimes_{\mathrm{cmp}} \mathcal{D}\rightarrow\mathcal{E}$.

Define $\mathcal{E}^\mathcal{D}:=[\mathcal{D},\mathcal{E}]_\mathrm{ont}$. From (\ref{eq:ontPsIdent}) we get the following isomorphism
\begin{align}
[\mathcal{F}\mathcal{G},\mathcal{E}]^\mathcal{J}_\mathrm{lnt}
&\cong 
[\mathcal{F}\mathcal{G},\mathcal{E}^\mathcal{J}]_\mathrm{lnt}\,.
\end{align}
In particular, we have a bijection on objects, so for a free arrow $\mathcal{J}=\mathbb{I}(:=0\rightarrow 1)$, (resp. free 2-cell 
$\mathcal{J}=\mathbb{D}(:=0\overset{\rightarrow}{\underset{\rightarrow}{\,\,\,\scriptstyle\Downarrow\,\,\,\,}}1)$), we get a bijection between arrows (resp. 2-cells) of $[\mathcal{F}\mathcal{G},\mathcal{E}]_\mathrm{lnt}$ and 2-functors $\mathcal{F}\mathcal{G}\rightarrow \mathcal{E}^\mathcal{I}$ (resp. $\mathcal{F}\mathcal{G}\rightarrow \mathcal{E}^\mathcal{D}$).

Consider a lax natural transformation between 2-functors respecting identifications (\ref{eq:TensorId1})-(\ref{eq:Tensor1CompSwap}) (as above)
\begin{equation}
\hat{b}_{\mathrm{cmp}}:\hat{B}_{\mathrm{cmp}}\Rightarrow\hat{B'}_{\mathrm{cmp}}:
\mathcal{F}\mathcal{G}\rightarrow\mathcal{E}\,.
\end{equation}
It corresponds to a 2-functor
\begin{equation}
\hat{b}_{\mathrm{cmp}}^\mathrm{curry}:
\mathcal{F}\mathcal{G}\rightarrow\mathcal{E}^\mathbb{I}
\end{equation}
which corresponds to a lax natural transformation $b:B\Rightarrow B'$ - the correspondence goes as follows
\begin{align}
\mathcal{G}&\xrightarrow{b_{\mathrm{cmp}}^\mathrm{curry}}\mathcal{U}\mathcal{E}^\mathbb{I}\\
C\boxtimes D &\mapsto bCD\\
c\boxtimes D,\,C\boxtimes d &\mapsto \sigma_{bcD},\,\sigma_{bCd}\\
\gamma\boxtimes D,\,C\boxtimes \delta &\mapsto (B\gamma D,B'\gamma D),\,(BC\delta,B'C\delta)
\label{eq:CmpNats}\\
\mathbf{id}_{-},\,\mathbf{comp}_{-},\,\mathbf{swap}_{-} &\mapsto (\eta_{B-},\eta_{B'-}),\, (\mu_{B-},\mu_{B'-}),\,(\sigma_{B-},\sigma_{B'-})\,.
\label{eq:CmpICS}
\end{align}
The RHS of (\ref{eq:CmpNats}) (resp. (\ref{eq:CmpICS})) being 2-cells of $\mathcal{E}^\mathbb{I}$ is equivalent to (\ref{eq:swapnat1}) and (\ref{eq:swapnat2}) (resp. (\ref{eq:bsigmaeta}), (\ref{eq:bsigmamu}) and (\ref{eq:swapyb})). 
The 2-functor $\hat{b}_{\mathrm{cmp}}^\mathrm{curry}$ respects identifications (\ref{eq:TensorId1})-(\ref{eq:Tensor1CompSwap}) because its source and target do, and so it also corresponds to a 2-functor
\begin{equation}
\hat{b}^\mathrm{curry}:
\mathcal{C}\boxtimes_\mathrm{cmp}\mathcal{D}\rightarrow\mathcal{E}^\mathbb{I}
\end{equation}
which is equivalently a lax natural transformation
\begin{equation}
\hat{b}:\hat{B}\Rightarrow\hat{B'}:
\mathcal{C}\boxtimes_\mathrm{cmp}\mathcal{D}\rightarrow\mathcal{E}\,.
\end{equation}

Similarly, a modification
\begin{equation} 
\hat{\beta}_{\mathrm{cmp}}:
\hat{b}_{\mathrm{cmp}}\rightarrow\hat{\bar b}_{\mathrm{cmp}}:
\hat{B}_{\mathrm{cmp}}\Rightarrow\hat{B'}_{\mathrm{cmp}}:
\mathcal{F}\mathcal{G}\rightarrow\mathcal{E}
\end{equation}
corresponds to a 2-functor
\begin{equation}
\hat{\beta}_{\mathrm{cmp}}^\mathrm{curry}:
\mathcal{F}\mathcal{G}\rightarrow\mathcal{E}^\mathbb{D}
\end{equation}
which corresponds to a modification $\beta:b\rightarrow \bar b$ via
\begin{align}
\mathcal{G}&\xrightarrow{\beta_{\mathrm{cmp}}^\mathrm{curry}}\mathcal{U}\mathcal{E}^\mathbb{D}\\
C\boxtimes D &\mapsto \beta CD\\
c\boxtimes D,\,C\boxtimes d &\mapsto (\sigma_{bcD},\sigma_{\bar{b}cD}),\,(\sigma_{bCd},\sigma_{\bar{b}Cd})
\label{eq:CmpModsMod}\\
\gamma\boxtimes D,\,C\boxtimes \delta &\mapsto (B\gamma D,B'\gamma D),\,(BC\delta,B'C\delta)
\label{eq:CmpNatsMod}\\
\mathbf{id}_{-},\,\mathbf{comp}_{-},\,\mathbf{swap}_{-} &\mapsto (\eta_{B-},\eta_{B'-}),\, (\mu_{B-},\mu_{B'-}),\,(\sigma_{B-},\sigma_{B'-})\,.
\label{eq:CmpICSMod}
\end{align}
The RHS of (\ref{eq:CmpModsMod}) being 1-cells of $\mathcal{E}^\mathbb{D}$ is equivalent to modification axioms (\ref{eq:modifc}) and (\ref{eq:modifd}). 
The RHS of (\ref{eq:CmpNatsMod}) and (\ref{eq:CmpICSMod}) being 2-cells of $\mathcal{E}^\mathbb{D}$, and $\hat{\beta}_{\mathrm{cmp}}^\mathrm{curry}$ respecting identifications (\ref{eq:TensorId1})-(\ref{eq:Tensor1CompSwap}), are just componentwise properties of $\hat{b}_{\mathrm{cmp}}^\mathrm{curry}$ and $\hat{\bar b}_{\mathrm{cmp}}^\mathrm{curry}$.
\end{proof} 

\subsection{Dual strictifications}

Notice that all the data and identifications for $\mathcal{G}(=:\mathcal{G}_\mathrm{lax}^{\mathcal{C}\mathcal{D}})$,
apart from those involving $\mathbf{swap}$,
are invariant (up to relabelling) with respect to exchanging $\mathcal{C}$ and $\mathcal{D}$. However, if we exchange $\mathcal{C}$ and $\mathcal{D}$ and consider oplax natural transformations at the same time, we arrive at an isomorphic computad $\mathcal{G}_{oplax}^{\mathcal{D}\mathcal{C}}\cong \mathcal{G}_\mathrm{lax}^{\mathcal{C}\mathcal{D}}$, the isomorphism consisting of exchanging the two positions in all the labels. All the identifications are isomorphic as well. This directly leads us to observe
\begin{cor}\label{prop:dualIso}
There is an isomorphism
\begin{equation}
\mathcal{C}\boxtimes \mathcal{D}\cong 
(\mathcal{D}^\mathrm{op}\boxtimes \mathcal{C}^\mathrm{op})^\mathrm{op}\,.
\end{equation}
\end{cor}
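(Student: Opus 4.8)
The plan is to upgrade the computad-level observation recorded just above---that $\mathcal{G}_{oplax}^{\mathcal{D}\mathcal{C}}\cong\mathcal{G}_\mathrm{lax}^{\mathcal{C}\mathcal{D}}$---to an isomorphism of the strictified 2-categories, and to recognise the oplax tensor product as the stated op-dual. Since $(-)^\mathrm{op}\colon\textrm{2-Cat}\to\textrm{2-Cat}$ is a strict involution, hence an isomorphism of categories, it carries the presentation of $\mathcal{C}\boxtimes_\mathrm{cmp}\mathcal{D}$ (a quotient of the free 2-category $\mathcal{F}\mathcal{G}$ by the identifications (\ref{eq:TensorId1})--(\ref{eq:Tensor1CompSwap})) to a presentation of the dual. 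It therefore suffices to work with presentations and to check that $(-)^\mathrm{op}$ matches generators and relations as claimed.

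First I would describe the underlying computad-with-relations of $(\mathcal{D}^\mathrm{op}\boxtimes_\mathrm{cmp}\mathcal{C}^\mathrm{op})^\mathrm{op}$. Building $\mathcal{D}^\mathrm{op}\boxtimes_\mathrm{cmp}\mathcal{C}^\mathrm{op}$ uses the computad $\mathcal{G}_\mathrm{lax}^{\mathcal{D}^\mathrm{op}\mathcal{C}^\mathrm{op}}$, and applying $(-)^\mathrm{op}$ reverses every 1-cell. This double reversal (once in passing to $\mathcal{D}^\mathrm{op}$ and $\mathcal{C}^\mathrm{op}$, once in the outer op) returns the edges to the original directions of the 1-cells of $\mathcal{D}$ and $\mathcal{C}$, so the nodes and edges agree with those of $\mathcal{G}_\mathrm{lax}^{\mathcal{C}\mathcal{D}}$ after exchanging the two label positions.

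The decisive point is the effect of $(-)^\mathrm{op}$ on the generating 2-cells. Because $(-)^\mathrm{op}$ preserves the direction of every 2-cell but reverses the order of composition, it leaves $\mathbf{id}$ and $\mathbf{comp}$ pointing in their original (lax) direction: these compare a composite to its value, so reversing the factors carries an $n$-step path to an $n$-step path and fixes the apparent orientation. By contrast $\mathbf{swap}$ has the two sides of a single commuting square as its source and target; reversing the composition order exchanges these two sides, so in the original coordinates $\mathbf{swap}$ acquires the reversed, i.e. oplax, orientation. A short string-diagram computation, flipped exactly as in the proof of the symmetries proposition, makes this explicit for $\mathbf{swap}$ and confirms that (\ref{eq:TensorSwap}) and (\ref{eq:TensorId1Swap})--(\ref{eq:Tensor1CompSwap}) are carried to the corresponding oplax identifications; since, once the directions of the data are fixed, all relations are determined uniquely, no separate check of the remaining relations is needed. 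This identifies the presentation of $(\mathcal{D}^\mathrm{op}\boxtimes_\mathrm{cmp}\mathcal{C}^\mathrm{op})^\mathrm{op}$ with that of $\mathcal{G}_{oplax}^{\mathcal{D}\mathcal{C}}$.

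Finally I would compose the isomorphisms: $(\mathcal{D}^\mathrm{op}\boxtimes\mathcal{C}^\mathrm{op})^\mathrm{op}$ is the strictification of $\mathcal{G}_{oplax}^{\mathcal{D}\mathcal{C}}$, which by the preceding discussion is isomorphic to the strictification of $\mathcal{G}_\mathrm{lax}^{\mathcal{C}\mathcal{D}}$, namely $\mathcal{C}\boxtimes\mathcal{D}$. I expect the main obstacle to be precisely the directional bookkeeping of the third paragraph: keeping straight that $(-)^\mathrm{op}$ fixes the orientation of the functor-comparison data $\mathbf{id}$ and $\mathbf{comp}$ while reversing that of the transformation datum $\mathbf{swap}$, since the three duals and the lax/oplax switch act differently on these two kinds of generator. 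An alternative, purely representable, proof would instead compare the corepresented assignments $\mathcal{E}\mapsto[\mathcal{C}\boxtimes\mathcal{D},\mathcal{E}]_\mathrm{lnt}$ and $\mathcal{E}\mapsto[(\mathcal{D}^\mathrm{op}\boxtimes\mathcal{C}^\mathrm{op})^\mathrm{op},\mathcal{E}]_\mathrm{lnt}$ using Proposition \ref{prop:compIso} together with (\ref{eq:dualOp}) and (\ref{eq:ontStreetIdentity}), concluding by Yoneda; but this route meets the same non-symmetry of $\mathbf{swap}$ in the guise of chaining the lax/oplax dualities, so I would lead with the computad argument above.
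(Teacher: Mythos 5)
Your proof is correct, but it takes a genuinely different route from the paper's. The paper never applies $(-)^\mathrm{op}$ to the presentation at all: it uses the observation $\mathcal{G}_{oplax}^{\mathcal{D}\mathcal{C}}\cong\mathcal{G}_\mathrm{lax}^{\mathcal{C}\mathcal{D}}$ only to conclude that $\mathcal{C}\boxtimes\mathcal{D}$ strictifies $\mathrm{Lax}_\mathrm{op}(\mathcal{D},\mathrm{Lax}_\mathrm{op}(\mathcal{C},\mathcal{E}))$ with respect to $[-,-]_\mathrm{ont}$, and then runs the chain $\mathrm{Lax}_\mathrm{op}(\mathcal{D},\mathrm{Lax}_\mathrm{op}(\mathcal{C},\mathcal{E}))\cong\mathrm{Lax}(\mathcal{D}^\mathrm{op},\mathrm{Lax}(\mathcal{C}^\mathrm{op},\mathcal{E}^\mathrm{op}))^\mathrm{op}\cong[\mathcal{D}^\mathrm{op}\boxtimes\mathcal{C}^\mathrm{op},\mathcal{E}^\mathrm{op}]_\mathrm{lnt}^\mathrm{op}\cong[(\mathcal{D}^\mathrm{op}\boxtimes\mathcal{C}^\mathrm{op})^\mathrm{op},\mathcal{E}]_\mathrm{ont}$ via (\ref{eq:dualOp}), (\ref{eq:compIso}) and (\ref{eq:dualOpPs}), concluding by uniqueness of the corepresenting object --- i.e.\ exactly the ``alternative, purely representable'' argument you relegate to your last sentence. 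Your primary argument instead pushes $(-)^\mathrm{op}$ through the presentation directly, and your directional bookkeeping is right: the double reversal (inner $\mathrm{op}$'s on $\mathcal{C},\mathcal{D}$ plus the outer one) restores the edges, keeps $\mathbf{id}$ and $\mathbf{comp}$ in the lax orientation (a reversed $n$-step path is still compared to its composite), and reverses only $\mathbf{swap}$ because reversing the order of a two-edge path exchanges the two sides of the square, landing precisely on $\mathcal{G}_{oplax}^{\mathcal{D}\mathcal{C}}$. What your route buys is an explicit, generator-level isomorphism that needs no appeal to the duality isomorphisms for functor 2-categories nor to a Yoneda-type uniqueness argument (only the easy fact that $(-)^\mathrm{op}$ commutes with free 2-categories on computads and with quotients); what the paper's route buys is brevity given the machinery already established, and it simultaneously produces the representing-object formula $\mathrm{Lax}_\mathrm{op}(\mathcal{C},\mathrm{Lax}_\mathrm{op}(\mathcal{D},\mathcal{E}))\cong[\mathcal{D}\boxtimes\mathcal{C},\mathcal{E}]_\mathrm{ont}$ used in the corollary that follows. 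The one point you should make explicit if writing this up is the (routine) claim that $(-)^\mathrm{op}$ of a 2-category presented by a computad with identifications is presented by the reversed computad with the reversed identifications.
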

\begin{proof}
The computad $\mathcal{G}_{oplax}^{\mathcal{D}\mathcal{C}}$, with its identifications, generates a 2-category strictifying
$\mathrm{Lax}_\mathrm{op}(\mathcal{D},\mathrm{Lax}_\mathrm{op}(\mathcal{C},\mathcal{E}))$. On the other hand,
\begin{align}
\mathrm{Lax}_\mathrm{op}(\mathcal{D},\mathrm{Lax}_\mathrm{op}(\mathcal{C},\mathcal{E}))
&\overset{(\ref{eq:dualOp})}{\cong} 
\mathrm{Lax}(\mathcal{D}^\mathrm{op},
	\mathrm{Lax}(\mathcal{C}^\mathrm{op},\mathcal{E}^\mathrm{op}))^\mathrm{op}\\
&\overset{(\ref{eq:compIso})}{\cong} 
[\mathcal{D}^\mathrm{op}\boxtimes\mathcal{C}^\mathrm{op},
	\mathcal{E}^\mathrm{op}]^\mathrm{op}_\mathrm{lnt}\\
&\overset{(\ref{eq:dualOpPs})}{\cong} 
[(\mathcal{D}^\mathrm{op}\boxtimes\mathcal{C}^\mathrm{op})^\mathrm{op},
	\mathcal{E}]_\mathrm{ont}\,.
\end{align}
\end{proof}
\begin{cor}
Given 2-categories $\mathcal{C}$ and $\mathcal{D}$ there are isomorphisms
\begin{align}
\mathrm{Lax}_\mathrm{op}(\mathcal{C},\mathrm{Lax}_\mathrm{op}(\mathcal{D},\mathcal{E}))&\cong
 [\mathcal{D}\boxtimes \mathcal{C},\mathcal{E}]_\mathrm{ont}\\
\mathrm{OpLax}_\mathrm{op}(\mathcal{C},\mathrm{OpLax}_\mathrm{op}(\mathcal{D},\mathcal{E}))&\cong
[(\mathcal{C}^\mathrm{co}\boxtimes\mathcal{D}^\mathrm{co})^\mathrm{co},\mathcal{E}]_\mathrm{ont}\\
\mathrm{OpLax}(\mathcal{C},\mathrm{OpLax}(\mathcal{D},\mathcal{E}))&\cong
[(\mathcal{D}^\mathrm{co}\boxtimes\mathcal{C}^\mathrm{co})^\mathrm{co},\mathcal{E}]_\mathrm{lnt}\,.
\end{align}
\end{cor}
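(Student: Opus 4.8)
The plan is to treat each of the three isomorphisms exactly as in the proof of Corollary~\ref{prop:dualIso}: peel off the (op)lax structure of the nested functor $2$-categories using the symmetry isomorphisms of the Proposition and its Corollaries, then apply the main strictification isomorphism (\ref{eq:compIso}), re-dualise the resulting strict-functor bracket, and finally tidy up the $\boxtimes$ using Corollary~\ref{prop:dualIso}. In each case, once the directions of all data are fixed the axioms take care of themselves (as remarked at the end of the Proposition), so the work is purely a bookkeeping chase of the two commuting involutions $(-)^\mathrm{op}$ and $(-)^\mathrm{co}$.

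For the first isomorphism I would apply (\ref{eq:dualOp}) to both the inner and the outer functor $2$-category (the inner double-$\mathrm{op}$ cancelling), then (\ref{eq:compIso}), then (\ref{eq:dualOpPs}):
\begin{align*}
\mathrm{Lax}_\mathrm{op}(\mathcal{C},\mathrm{Lax}_\mathrm{op}(\mathcal{D},\mathcal{E}))
&\overset{(\ref{eq:dualOp})}{\cong}
\mathrm{Lax}(\mathcal{C}^\mathrm{op},\mathrm{Lax}(\mathcal{D}^\mathrm{op},\mathcal{E}^\mathrm{op}))^\mathrm{op}\\
&\overset{(\ref{eq:compIso})}{\cong}
[\mathcal{C}^\mathrm{op}\boxtimes\mathcal{D}^\mathrm{op},\mathcal{E}^\mathrm{op}]_\mathrm{lnt}^\mathrm{op}\\
&\overset{(\ref{eq:dualOpPs})}{\cong}
[(\mathcal{C}^\mathrm{op}\boxtimes\mathcal{D}^\mathrm{op})^\mathrm{op},\mathcal{E}]_\mathrm{ont}\\
&\overset{(\ref{prop:dualIso})}{\cong}
[\mathcal{D}\boxtimes\mathcal{C},\mathcal{E}]_\mathrm{ont}.
\end{align*}
The last step uses Corollary~\ref{prop:dualIso} in the form $\mathcal{D}\boxtimes\mathcal{C}\cong(\mathcal{C}^\mathrm{op}\boxtimes\mathcal{D}^\mathrm{op})^\mathrm{op}$; this is just the computation of Corollary~\ref{prop:dualIso} with $\mathcal{C}$ and $\mathcal{D}$ interchanged, followed by one further appeal to that Corollary. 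The second isomorphism runs identically with $(-)^\mathrm{co}$ in place of $(-)^\mathrm{op}$, using (\ref{eq:dualCo}), (\ref{eq:compIso}) and (\ref{eq:dualCoPs}):
\begin{align*}
\mathrm{OpLax}_\mathrm{op}(\mathcal{C},\mathrm{OpLax}_\mathrm{op}(\mathcal{D},\mathcal{E}))
&\overset{(\ref{eq:dualCo})}{\cong}
\mathrm{Lax}(\mathcal{C}^\mathrm{co},\mathrm{Lax}(\mathcal{D}^\mathrm{co},\mathcal{E}^\mathrm{co}))^\mathrm{co}\\
&\overset{(\ref{eq:compIso})}{\cong}
[\mathcal{C}^\mathrm{co}\boxtimes\mathcal{D}^\mathrm{co},\mathcal{E}^\mathrm{co}]_\mathrm{lnt}^\mathrm{co}\\
&\overset{(\ref{eq:dualCoPs})}{\cong}
[(\mathcal{C}^\mathrm{co}\boxtimes\mathcal{D}^\mathrm{co})^\mathrm{co},\mathcal{E}]_\mathrm{ont}.
\end{align*}
Here no appeal to Corollary~\ref{prop:dualIso} is needed, since the $\mathrm{co}$-duality does not transpose the two tensor factors and the bracket already emerges in the required shape.

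The third isomorphism is the delicate one. I would start from the identity $\mathrm{OpLax}(\mathcal{X},\mathcal{Y})\cong\mathrm{Lax}(\mathcal{X}^\mathrm{co\,op},\mathcal{Y}^\mathrm{co\,op})^\mathrm{co\,op}$ of the first Corollary above, applied to both functor $2$-categories, then (\ref{eq:compIso}), then the combined strict-functor duality $[\mathcal{A},\mathcal{E}]_\mathrm{lnt}\cong[\mathcal{A}^\mathrm{co\,op},\mathcal{E}^\mathrm{co\,op}]_\mathrm{lnt}^\mathrm{co\,op}$ — which is not listed explicitly, but is the strict restriction of that same identity, equivalently the composite of (\ref{eq:dualOpPs}) and (\ref{eq:dualCoPs}):
\begin{align*}
\mathrm{OpLax}(\mathcal{C},\mathrm{OpLax}(\mathcal{D},\mathcal{E}))
&\cong
\mathrm{Lax}(\mathcal{C}^\mathrm{co\,op},\mathrm{Lax}(\mathcal{D}^\mathrm{co\,op},\mathcal{E}^\mathrm{co\,op}))^\mathrm{co\,op}\\
&\overset{(\ref{eq:compIso})}{\cong}
[\mathcal{C}^\mathrm{co\,op}\boxtimes\mathcal{D}^\mathrm{co\,op},\mathcal{E}^\mathrm{co\,op}]_\mathrm{lnt}^\mathrm{co\,op}\\
&\cong
[(\mathcal{C}^\mathrm{co\,op}\boxtimes\mathcal{D}^\mathrm{co\,op})^\mathrm{co\,op},\mathcal{E}]_\mathrm{lnt}\\
&\overset{(\ref{prop:dualIso})}{\cong}
[(\mathcal{D}^\mathrm{co}\boxtimes\mathcal{C}^\mathrm{co})^\mathrm{co},\mathcal{E}]_\mathrm{lnt}.
\end{align*}
The final step is where the care is needed: by Corollary~\ref{prop:dualIso} one has $\mathcal{C}^\mathrm{co\,op}\boxtimes\mathcal{D}^\mathrm{co\,op}\cong(\mathcal{D}^\mathrm{co}\boxtimes\mathcal{C}^\mathrm{co})^\mathrm{op}$, so applying $(-)^\mathrm{co\,op}$ and using $\mathrm{op\,co\,op}=\mathrm{co}$ identifies $(\mathcal{C}^\mathrm{co\,op}\boxtimes\mathcal{D}^\mathrm{co\,op})^\mathrm{co\,op}$ with $(\mathcal{D}^\mathrm{co}\boxtimes\mathcal{C}^\mathrm{co})^\mathrm{co}$. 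I expect the main obstacle to be exactly this last kind of manipulation in the third case: the required $\mathrm{co\,op}$ bracket-duality is not among the stated corollaries and must be assembled, and the simplification of the iterated dual of the tensor product demands that the two commuting involutions and Corollary~\ref{prop:dualIso} be combined in the correct order. Everything else is the same routine chase already performed for Corollary~\ref{prop:dualIso}.
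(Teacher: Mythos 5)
Your proposal is correct and follows essentially the route the paper intends: the corollary is stated without proof immediately after Corollary~\ref{prop:dualIso}, whose three-step chain (dualise via (\ref{eq:dualOp})/(\ref{eq:dualCo}), apply (\ref{eq:compIso}), re-dualise the strict bracket via (\ref{eq:dualOpPs})/(\ref{eq:dualCoPs})) is exactly what you replicate in each of the three cases. Your bookkeeping in the third case, including the assembled $\mathrm{co\,op}$ bracket-duality and the identification $(\mathcal{C}^\mathrm{co\,op}\boxtimes\mathcal{D}^\mathrm{co\,op})^\mathrm{co\,op}\cong(\mathcal{D}^\mathrm{co}\boxtimes\mathcal{C}^\mathrm{co})^\mathrm{co}$, checks out.
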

When $\mathcal{C}=\mathcal{D}=1$, we get free distributive laws between monads with opmorphisms (opfunctors in \cite{Street1972}), between comonads with opmorphisms and between comonads with morphisms, respectively.

Now we consider strictification for the case when one of the homs has oplax functors - $\textrm{Lax}(\mathcal{C},\textrm{OpLax}(\mathcal{D},\mathcal{E}))
$. Consider a computad $\mathcal{G}_{m}$, obtained from $\mathcal{G}$ by reversing 2-cells marked by (\textbf{f1}) and changing identifications accordingly. It generates a mixed tensor product $\mathcal{C}\boxtimes^\mathrm{m}_\mathrm{cmp}\mathcal{D}$, which analogously to Proposition \ref{prop:compIso} and Corollary \ref{prop:dualIso} satisfies Corollary \ref{prop:dualMixed}.
\begin{cor}\label{prop:dualMixed}
There are isomorphisms:
\begin{align}
\mathrm{Lax}(\mathcal{C},\mathrm{OpLax}(\mathcal{D},\mathcal{E}))&\cong
[\mathcal{C}\boxtimes^\mathrm{m}_\mathrm{cmp} \mathcal{D},\mathcal{E}]_\mathrm{lnt}\\
\mathcal{C}\boxtimes^\mathrm{m}_\mathrm{cmp} \mathcal{D}&\cong 
(\mathcal{D}^\mathrm{co}\boxtimes^\mathrm{m}_\mathrm{cmp} \mathcal{C}^\mathrm{co})^\mathrm{co}\,.
\end{align}
\end{cor}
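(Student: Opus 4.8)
The plan is to derive the two isomorphisms by mirroring, respectively, Proposition \ref{prop:compIso} and Corollary \ref{prop:dualIso}, the only new feature being that the (\textbf{f1}) comparison cells are reversed.

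For the first isomorphism I would re-run the unpacking of Section \ref{sec:unpacking}, replacing the inner lax functors $BC$ by oplax functors. The only data whose direction changes are the inner unit and composition comparisons $\eta_{BC1_D}$ and $\mu_{BCdd'}$ — precisely the cells marked (\textbf{f1}) — together with the matching reversal in the axioms (\ref{eq:unit1})--(\ref{eq:comp1}) and their naturality and compatibility laws. These are exactly the cells reversed in passing from $\mathcal{G}$ to $\mathcal{G}_m$, so the dictionary between the data and identifications of $\mathcal{G}_m$ and those of an object $B$ holds verbatim. The currying argument through $\mathcal{E}^{\mathbb{I}}$ and $\mathcal{E}^{\mathbb{D}}$ that handles arrows and $2$-cells in the proof of Proposition \ref{prop:compIso} is insensitive to the direction of the (\textbf{f1}) cells and transfers unchanged, yielding $\mathrm{Lax}(\mathcal{C},\mathrm{OpLax}(\mathcal{D},\mathcal{E}))\cong[\mathcal{C}\boxtimes^\mathrm{m}_\mathrm{cmp}\mathcal{D},\mathcal{E}]_\mathrm{lnt}$.

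For the duality I would first record a computad-level symmetry analogous to $\mathcal{G}_{oplax}^{\mathcal{D}\mathcal{C}}\cong\mathcal{G}_\mathrm{lax}^{\mathcal{C}\mathcal{D}}$. Exchanging the two positions in every label of $\mathcal{G}_m^{\mathcal{C}\mathcal{D}}$ turns the reversed (\textbf{f1}) cells into reversed (\textbf{f2}) cells, leaves the now-inner $\mathcal{C}$-comparisons unreversed, and reverses the handedness of $\mathbf{swap}$. Reading this off, the image computad presents $\mathcal{C}\boxtimes^\mathrm{m}_\mathrm{cmp}\mathcal{D}$ as the strictification, in the $\mathrm{ont}$ convention, of $\mathrm{OpLax}_\mathrm{op}(\mathcal{D},\mathrm{Lax}_\mathrm{op}(\mathcal{C},\mathcal{E}))$ — outer oplax functor with oplax transformations (reversed (\textbf{f2}), $\mathrm{ont}$ bracket), inner lax functor with oplax transformations (unreversed (\textbf{f1}), reversed $\mathbf{swap}$):
\begin{equation*}
[\mathcal{C}\boxtimes^\mathrm{m}_\mathrm{cmp}\mathcal{D},\mathcal{E}]_\mathrm{ont}\cong\mathrm{OpLax}_\mathrm{op}(\mathcal{D},\mathrm{Lax}_\mathrm{op}(\mathcal{C},\mathcal{E})).
\end{equation*}
I would then run a chain of co-dualities on the right, using (\ref{eq:dualCo}) on the outer hom, the auxiliary identity $\mathrm{Lax}_\mathrm{op}(\mathcal{C},\mathcal{E})^\mathrm{co}\cong\mathrm{OpLax}(\mathcal{C}^\mathrm{co},\mathcal{E}^\mathrm{co})$ (a composite of (\ref{eq:dualOp}) with the co-op corollary) on the inner hom, the first isomorphism just proved, and finally (\ref{eq:dualCoPs}):
\begin{align*}
\mathrm{OpLax}_\mathrm{op}(\mathcal{D},\mathrm{Lax}_\mathrm{op}(\mathcal{C},\mathcal{E}))
&\cong\mathrm{Lax}(\mathcal{D}^\mathrm{co},\mathrm{OpLax}(\mathcal{C}^\mathrm{co},\mathcal{E}^\mathrm{co}))^\mathrm{co}\\
&\cong[\mathcal{D}^\mathrm{co}\boxtimes^\mathrm{m}_\mathrm{cmp}\mathcal{C}^\mathrm{co},\mathcal{E}^\mathrm{co}]_\mathrm{lnt}^\mathrm{co}\\
&\cong[(\mathcal{D}^\mathrm{co}\boxtimes^\mathrm{m}_\mathrm{cmp}\mathcal{C}^\mathrm{co})^\mathrm{co},\mathcal{E}]_\mathrm{ont}.
\end{align*}
Combining with the displayed computad symmetry gives $[\mathcal{C}\boxtimes^\mathrm{m}_\mathrm{cmp}\mathcal{D},\mathcal{E}]_\mathrm{ont}\cong[(\mathcal{D}^\mathrm{co}\boxtimes^\mathrm{m}_\mathrm{cmp}\mathcal{C}^\mathrm{co})^\mathrm{co},\mathcal{E}]_\mathrm{ont}$ naturally in $\mathcal{E}$, and the representability argument (testing against $\mathcal{E}$, $\mathcal{E}^{\mathbb{I}}$ and $\mathcal{E}^{\mathbb{D}}$ to recover objects, $1$-cells and $2$-cells) produces $\mathcal{C}\boxtimes^\mathrm{m}_\mathrm{cmp}\mathcal{D}\cong(\mathcal{D}^\mathrm{co}\boxtimes^\mathrm{m}_\mathrm{cmp}\mathcal{C}^\mathrm{co})^\mathrm{co}$.

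The main obstacle I anticipate is the variance bookkeeping in the duality: one must verify that exchanging label positions reverses exactly the (\textbf{f2}) cells and $\mathbf{swap}$ (and nothing else), and that the co-duality chain converts the resulting oplax-functor/oplax-transformation variances into $(\mathcal{D}^\mathrm{co}\boxtimes^\mathrm{m}_\mathrm{cmp}\mathcal{C}^\mathrm{co})^\mathrm{co}$ and not some other dual. Pinning down the auxiliary identity $\mathrm{Lax}_\mathrm{op}(\mathcal{C},\mathcal{E})^\mathrm{co}\cong\mathrm{OpLax}(\mathcal{C}^\mathrm{co},\mathcal{E}^\mathrm{co})$ and the placement of the single $\mathrm{co}$ superscript is where an error is most likely to creep in; once the directions of all generating cells are fixed, the axioms match automatically, exactly as observed at the end of the proof of the symmetry Proposition.
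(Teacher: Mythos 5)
Your proposal is correct and follows exactly the route the paper intends: the paper gives no separate proof of this corollary, merely asserting that it follows "analogously to Proposition \ref{prop:compIso} and Corollary \ref{prop:dualIso}", and your argument is a faithful elaboration of that analogy (re-running the unpacking with the (\textbf{f1}) cells reversed, then replacing the op-duality chain of Corollary \ref{prop:dualIso} by the corresponding co-duality chain). Your intermediate identity $[\mathcal{C}\boxtimes^\mathrm{m}_\mathrm{cmp}\mathcal{D},\mathcal{E}]_\mathrm{ont}\cong\mathrm{OpLax}_\mathrm{op}(\mathcal{D},\mathrm{Lax}_\mathrm{op}(\mathcal{C},\mathcal{E}))$ and the auxiliary duality $\mathrm{Lax}_\mathrm{op}(\mathcal{C},\mathcal{E})^\mathrm{co}\cong\mathrm{OpLax}(\mathcal{C}^\mathrm{co},\mathcal{E}^\mathrm{co})$ both check out and are consistent with the list of cases the paper derives from this corollary.
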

The cases based on this one are:
\begin{align}
\mathrm{OpLax}_\mathrm{op}(\mathcal{C},\mathrm{Lax}_\mathrm{op}(\mathcal{D},\mathcal{E}))&\cong
[\mathcal{D}\boxtimes^\mathrm{m}_\mathrm{cmp} \mathcal{C},\mathcal{E}]_\mathrm{ont}\\
\mathrm{Lax}_\mathrm{op}(\mathcal{C},\mathrm{OpLax}_\mathrm{op}(\mathcal{D},\mathcal{E}))
&\cong [(\mathcal{C}^\mathrm{op}\boxtimes^\mathrm{m}_{cmp}\mathcal{D}^\mathrm{op})^\mathrm{op},\mathcal{E}]_\mathrm{ont}\\
\mathrm{OpLax}(\mathcal{C},\mathrm{Lax}(\mathcal{D},\mathcal{E}))&\cong
[(\mathcal{D}^\mathrm{op}\boxtimes^\mathrm{m}_{cmp}\mathcal{C}^\mathrm{op})^\mathrm{op},\mathcal{E}]_\mathrm{lnt}\,.
\end{align}

Finally, when the two homs have different choice for the direction of natural transformations, there is no strictification tensor product, mainly because we have to choose a type of natural transformation for the strict hom. For example, note that the objects $B\in\textrm{Lax}(\mathcal{C},\textrm{Lax}_\mathrm{op}(\mathcal{D},\mathcal{E}))$ correspond to the objects $B\in[\mathcal{D}\boxtimes\mathcal{C},\mathcal{E}]_\mathrm{(l)(o)nt}$ but crossings in the former allow\footnote{which is a shorter notation for $B'cD'\circ bCD'\circ BCd\Rightarrow B'C'd\circ bC'D\circ BcD$} $c\circ b\circ d\Rightarrow d\circ b\circ c$ while crossings of the latter allow $c\circ d\circ b\Rightarrow b\circ d\circ c$ for lax and $b\circ c\circ d\Rightarrow d\circ c\circ b$ for oplax natural transformations, suggesting that this case cannot be strictified. In a similar way, $\textrm{Lax}(\mathcal{C},\textrm{OpLax}_\mathrm{op}(\mathcal{D},\mathcal{E}))$ does not permit strictifications.

\section{Simplicial approach} \label{sec:direct}

\subsection{B\'enabou construction of the 2-category of paths} \label{sec:paths}

Let $\mathcal{C}$ be a 2-category and $\mathcal{C}^\dagger$ the 2-category of ``paths'' in $\mathcal{C}$, consisting of the same objects as $\mathcal{C}$, and
arrows between $C$ and $C'$ are strict 2-functors $p$ representing paths in $\mathcal{C}$ between $C$ and $C'$; that is,
\begin{equation}
{[}n]\xrightarrow{p}\mathcal{C},\;\;p(0)=C,\;\;p(n)=C',
\end{equation}
where $[n]$ is an object of $\Delta_{\bot\top}$, for details see Appendix \ref{sec:simpaths}.
Denote by\footnote{We reserve $p_i$, without brackets, to mean the length of the image as in (\ref{eq:pathDecomp}).}
 $(p)_i$ the $i^\mathrm{th}$ component in the path
\begin{equation}
(p)_i=p\left((i-1)\rightarrow i\right)\,.
\end{equation}

The identity is a path of zero length on $C$:
\begin{align}
[0]&\xrightarrow{0_C}\mathcal{C}\\
0&\mapsto C
\end{align}
and composition is given by ``concatenation'',
\begin{equation}
(n',p')\circ(n,p) = (n+n',p+p')
\end{equation}
where $(p+p')_i=(p)_i$ if $i\leq n$ and $(p+p')_i=(p')_{i-n}$ otherwise. This composition is strictly associative and unital.

Finally, 2-cells between $(n,p)$ and $(\bar n,\bar p)$, are pairs $(\xi,\alpha)$ where $\xi:[\bar n]\rightarrow [n]$ is a morphism in $\Delta_{\bot\top}$ and $\alpha$ is an identity on components, oplax-natural transformation, shortly icon, introduced in \cite{Lack2010a}:
\begin{equation}\label{eq:iconDef}
\alpha: p\circ\xi \Rightarrow \bar p,\,.
\end{equation}
with 2-cell components on identity arrows restricted to $\alpha_{1_i}=1_{1_{\bar p (i)}}$, which is true for general (op)lax transformations between normal lax functors.
So, $\alpha$ is determined by $\bar n$ components on non-identity arrows:
\begin{equation}
\alpha_i:=\alpha_{(i-1)\rightarrow i}:
(p\circ\xi)\left((i-1)\rightarrow i\right)
\Rightarrow (\bar p)_i\,.
\end{equation}
Note that if $\xi(i)=\xi(i-1)$ then the source of the corresponding component of $\alpha$ is the identity; that is, $\alpha_i:1_{p\xi (i)}\Rightarrow(\bar p)_i$.
The identity 2-cells is given by $1_{(n,p)}=(1_{[n]},1_p)$. The vertical composite of
$(\xi,\alpha)$ and $(\bar\xi,\bar\alpha)$ is obtained by pasting, as in the diagram \ref{diag:pathMorphComp}.
\begin{equation}\label{diag:pathMorphComp}
\begin{tikzpicture}[
baseline=(current  bounding  box.center)]
\def \strx {2.0};
\def \stry {0.6};
\node (n) at (-1 * \strx ,2 * \stry) {$[n]$};
\node (nb) at (-1 * \strx ,0 * \stry) {$[\bar n]$};
\node (nbb) at (-1 * \strx ,-2 * \stry) {$[\bar{\bar n}]$};
\node (c) at (-0 * \strx ,0) {$\mathcal{C}$};
\path[->,>=angle 90]
(n) edge node[above] {$p$} (c);
\path[->,>=angle 90] 
(nb) edge  node[below] {$\bar p$} (c);
\path[->,>=angle 90] 
(nbb) edge  node[below] {$\bar{\bar p}$} (c);
\path[<-,>=angle 90]
(n) edge node [left] {$\xi$} (nb);
\path[<-,>=angle 90]
(nb) edge node [left] {$\bar \xi$} (nbb);

\node at (-0.7 * \strx,0.7 * \stry) {$\alpha\Downarrow $};
\node at (-0.7 * \strx,-0.7 * \stry) {$\bar{\alpha}\Downarrow $};
\end{tikzpicture}
\end{equation}
The horizontal composition is concatenation, analogous to the one for path (1-cells),  $(\xi',\alpha')\circ(\xi,\alpha) = (\xi+\xi',\alpha+\alpha')$,
where $(\alpha+\alpha')_i=\alpha_i$ if $i\leq n$, and $(\alpha+\alpha')_i=\alpha'_{i-n}$ otherwise.

\subsection{Tensor product simplicially}

We proceed to describe our main result: a model $\mathcal{C} \boxtimes_{sim}\mathcal{D}$ for the strictification tensor product and then show that it is isomorphic to $\mathcal{C}\boxtimes_\mathrm{cmp}\mathcal{D}$.

Objects of $\mathcal{C}\boxtimes_{sim}\mathcal{D}$ are pairs $(C;D)$ with $C\in \mathcal{C}$ and $D \in\mathcal{D}$.

An arrow in $\mathcal{C}\boxtimes_{sim}\mathcal{D}$ is a sextuple $(n,p,r;m,q,s)$. It consists of a path in $\mathcal{C}$ of length $n$, a path in $\mathcal{D}$ (of length $m$) 
\begin{equation}
p:[n]\rightarrow \mathcal{C},~~q:[m]\rightarrow \mathcal{D}
\end{equation}
and a way to combine them into a string of length $n+m$; that is, a shuffle
\begin{align}
[n]\xleftarrow{r}  [n+m]  \xrightarrow{s} [m]
\end{align}
where $r$ and $s$ satisfy a compatibility condition (\ref{eq:rscomp}) saying that one increases if and only if the other one does not. 

The identity (empty path) on $(C;D)$ is defined by taking $m=n=0$, $r=s=1_{[0]}$, and $p$ and $q$ pick the objects $C$ and $D$. Composition is defined by path concatenation, formally expressed as tensor product of shuffles.

Below is an example of a 1-cell $\{c_1,d_1,c_2,c_3,d_2\}:(C_1,D_1)\rightarrow(C_4,D_3)$ in $\mathcal{C}\boxtimes_\mathrm{dir}\mathcal{D}$. Here, $n=3$, $m=2$, $r:[5]\rightarrow [3]$ and $s:[5]\rightarrow [2]$ give the coordinates of the corresponding node in the path, and $p:[3]\rightarrow \mathcal{C}$ and $q:[2]\rightarrow \mathcal{D}$ are the obvious functors producing the paths $\{c_i\}_{i=1}^3$ and $\{d_i\}_{i=1}^2$ in $\mathcal{C}$ and $\mathcal{D}$. 

\begin{equation}\label{diag:1cell}
\begin{tikzpicture}[baseline=(current  bounding  box.center)]
\def \str {1.0}
\node (C1) at (1 * \str,0 * \str) {$C_1$};
\node (C2) at (3 * \str,0 * \str) {$C_2$};
\node (C3) at (5 * \str,0 * \str) {$C_3$};
\node (C4) at (7 * \str,0 * \str) {$C_4$};

\path[->,font=\scriptsize,>=angle 90]
(C1) edge node[above] {$c_1$} (C2)
(C2) edge node[above] {$c_2$} (C3)
(C3) edge node[above] {$c_3$} (C4);

\node (D1) at (-0.5 * \str,-1 * \str) {$D_1$};
\node (D2) at (-0.5 * \str,-2 * \str) {$D_2$};
\node (D3) at (-0.5 * \str,-3 * \str) {$D_3$};

\path[->,font=\scriptsize,>=angle 90]
(D1) edge node[left] {$d_1$} (D2)
(D2) edge node[left] {$d_2$} (D3);

\node (C1D1) at (1 * \str,-1 * \str) {$C_1D_1$};
\node (C2D1) at (3 * \str,-1 * \str) {$C_2D_1$};
\node (C2D2) at (3 * \str,-2 * \str) {$C_2D_2$};
\node (C3D2) at (5 * \str,-2 * \str) {$C_3D_2$};
\node (C4D2) at (7 * \str,-2 * \str) {$C_4D_2$};
\node (C4D3) at (7 * \str,-3 * \str) {$C_4D_3$};

\path[->,font=\scriptsize,>=angle 90]
(C1D1) edge node[above] {$c_1$} (C2D1)
(C2D1) edge node[left] {$d_1$} (C2D2)
(C2D2) edge node[above] {$c_2$} (C3D2)
(C3D2) edge node[above] {$c_3$} (C4D2)
(C4D2) edge node[left] {$d_2$} (C4D3);
\end{tikzpicture}
\end{equation}

A 2-cell 
\begin{align}\label{eq:ten2cell}
(\xi,\alpha;\rho,\beta):(n,p,r;m,q,s)\rightarrow(\bar n,\bar p,\bar r;\bar m,\bar q,\bar s)
\end{align}
consists of:
\begin{itemize}
\item a shuffle morphism, that is functors $\xi: [\bar n]\rightarrow [n]$, $\rho: [\bar m]\rightarrow [m]$ preserving the first and the last element and satisfying, for all $\bar i\leq \bar n+\bar m$,
\begin{align} \label{eq:2cellneq}
\min r^{-1}(\xi \bar r\bar i) \leq 
 \max s^{-1}(\rho \bar s\bar i)
\end{align}
a condition ensuring that there are no swaps of arrows from $\mathcal{C}$ and $\mathcal{D}$ in the wrong direction. The condition (\ref{eq:2cellneq}) is an explicitly written condition for the existence of the natural transformation (\ref{eq:shufmornat}).

\item path 2-cells, that is, icons $\alpha: p\circ \xi \Rightarrow \bar p$ and $\beta: q\circ\rho \Rightarrow \bar q$, as defined in section \ref{sec:paths}

\end{itemize}

Below is an example of a 2-cell.

\begin{equation}
\begin{tikzpicture}[
baseline=(current  bounding  box.center)]
\def \str {1.0}
\node (C1D1) at (0 * \str,0 * \str) {$C_1D_1$};
\node (C2D1) at (2 * \str,0 * \str) {$C_2D_1$};
\node (C2D2) at (4 * \str,0 * \str) {$C_2D_2$};
\node (C3D2) at (6 * \str,0 * \str) {$C_3D_2$};
\node (C4D2) at (8 * \str,0 * \str) {$C_4D_2$};
\node (C4D3) at (10 * \str,0 * \str) {$C_4D_3$};

\path[->,font=\scriptsize,>=angle 90]
(C1D1) edge node(f1)[below] {$c_1$} (C2D1)
(C2D1) edge node(g1)[below] {$d_1$} (C2D2)
(C2D2) edge node(f2)[below] {$c_2$} (C3D2)
(C3D2) edge node(f3)[below] {$c_3$} (C4D2)
(C4D2) edge node(g2)[below] {$d_2$} (C4D3);

\node (C1D1) at (1 * \str,-2 * \str) {$C_1D_1$};
\node (C2D1) at (3 * \str,-2 * \str) {$C_2D_1$};
\node (C2D2) at (5 * \str,-2 * \str) {$C_2D_3$};
\node (C3D2) at (7 * \str,-2 * \str) {$C_2D_3$};
\node (C4D3) at (9 * \str,-2 * \str) {$C_4D_3$};

\path[->,font=\scriptsize,>=angle 90]
(C1D1) edge node(f1p)[above] {$\bar c_1$} (C2D1)
(C2D1) edge node(g1p)[above] {$\bar d_1$} (C2D2)
(C2D2) edge node(f2p)[above] {$\bar c_2$} (C3D2)
(C3D2) edge node(f3p)[above] {$\bar c_3$} (C4D3);

\path[->,font=\scriptsize,>=angle 90]
(f1) edge (f1p)
(f2) edge (f3p)
(f3) edge (f3p)
(g1) edge (g1p)
(g2) edge (g1p);
\end{tikzpicture}
\end{equation}
The above graph represents two 1-cells and data of $\xi$ and $\rho$, and what remains is to specify icon components $\alpha_1:c_1\Rightarrow \bar c_1,$ $\alpha_2:1_{C_2}\Rightarrow \bar c_2 ,$ $\alpha_3:c_3\circ c_2\Rightarrow \bar c_3$ in $\mathcal{C}$ and $\beta_1:d_2\circ d_1\Rightarrow \bar d_1$ in $\mathcal{D}$.

Vertical composition and whiskerings are defined componentwise as in $\mathbf {Shuff}$, $\mathcal{C}^\dagger$ and $\mathcal{D}^\dagger$.

\subsection{As a limit}

The category $\mathcal{C} \boxtimes_{sim}\mathcal{D}$ is a limit of the following diagram in $\textrm{2-Cat}$.
\[
\begin{array}{rcccccccl}
\mathcal{C}^\dagger & \rightarrow & \Sigma\Delta_{\bot\top} & \leftarrow & \mathrm{FDL} 
				&\rightarrow & \Sigma\Delta_{\bot\top} & \leftarrow & \mathcal{D}^\dagger \\
C & \mapsto & * & \mapsfrom & * &\mapsto * & \mapsfrom & D\\
(n,p) & \mapsto & [n] & \mapsfrom & (n,m,s,r) &\mapsto [m] & \mapsfrom & (m,q)\\
(\xi,\alpha) & \mapsto & \xi & \mapsfrom & (\xi,\rho,\gamma) &\mapsto \rho & \mapsfrom & (\rho,\beta)
\end{array}
\]

\subsection{Isomorphism between two constructions}

This part is about proving the following proposition.
\begin{thm}\label{thm:mainIso}
There is an isomorphism
\begin{equation}
\mathcal{C}\boxtimes_{sim} \mathcal{D} \cong \mathcal{C}\boxtimes_{cmp} \mathcal{D}\,.
\end{equation}
\end{thm}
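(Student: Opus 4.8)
The plan is to construct an explicit isomorphism $\Phi:\mathcal{C}\boxtimes_{sim}\mathcal{D}\to\mathcal{C}\boxtimes_{cmp}\mathcal{D}$ of 2-categories by defining it cell-dimension by cell-dimension, checking functoriality, and exhibiting an inverse. Both 2-categories share the same objects ($\mathrm{Ob}\,\mathcal{C}\times\mathrm{Ob}\,\mathcal{D}$), so $\Phi$ is the identity on 0-cells. The essential content is matching the 1-cells and 2-cells.

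First I would treat the 1-cells. A 1-cell in $\mathcal{C}\boxtimes_{sim}\mathcal{D}$ is a sextuple $(n,p,r;m,q,s)$: two paths together with a shuffle that interleaves them into a single string of length $n+m$ (as in the staircase diagram of (\ref{diag:1cell})). A 1-cell in $\mathcal{C}\boxtimes_{cmp}\mathcal{D}$ is a composable path of generating edges $c\boxtimes D$ and $C\boxtimes d$ in $\mathcal{F}\mathcal{G}$. The shuffle data of the simplicial side is precisely a word in the two alphabets, so I read off from $(r,s)$ the sequence of horizontal ($c_i\boxtimes -$) and vertical ($-\boxtimes d_j$) steps and let $\Phi$ send the sextuple to the corresponding concatenated path. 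That this is a bijection follows because the compatibility condition (one of $r,s$ increases iff the other does not) is exactly the condition that the staircase is a genuine path; composition on both sides is concatenation (tensor of shuffles on one side, path composition on the other), so $\Phi$ is functorial on 1-cells essentially by construction.

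Next I would handle 2-cells. On the simplicial side a 2-cell is $(\xi,\alpha;\rho,\beta)$: a shuffle morphism $(\xi,\rho)$ reindexing the two paths subject to the non-crossing inequality (\ref{eq:2cellneq}), together with icons $\alpha,\beta$ recording the 2-cellular data in $\mathcal{C}$ and $\mathcal{D}$. The key observation is that the generating 2-cells of $\mathcal{G}$ decompose any such datum: the icon components $\alpha_i,\beta_j$ supply the $\gamma\boxtimes D$ and $C\boxtimes\delta$ generators (plus $\mathbf{id}$ and $\mathbf{comp}$ where a component has an identity source or where several source-edges collapse to one target-edge via $\xi,\rho$), while the reordering forced by changing the interleaving is built from $\mathbf{swap}$ generators. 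Concretely, I would factor the passage from source-word to target-word into elementary moves — a single transposition of an adjacent $(c,d)$ pair (a $\mathbf{swap}$), a single application of an icon component (a $\mathbf{comp}$ or a $\gamma\boxtimes D$/$C\boxtimes\delta$), or a unit insertion (an $\mathbf{id}$) — and paste the corresponding generators. I would then verify that the pasted 2-cell is well-defined, i.e.\ independent of the chosen factorization, using exactly the identifications (\ref{eq:TensorId1})--(\ref{eq:Tensor1CompSwap}): the naturality relations (\ref{eq:TensorComp1})--(\ref{eq:TensorSwap}) let me slide icon components past swaps, the associativity/unit laws (\ref{eq:TensorUnit})--(\ref{eq:TensorAssoc}) make repeated $\mathbf{comp}$'s unambiguous, and the braiding-style relations (\ref{eq:TensorComp1Swap})--(\ref{eq:Tensor1CompSwap}) together with (\ref{eq:TensorId1Swap})--(\ref{eq:Tensor1IdSwap}) reconcile the two orders in which a composite can be transposed past another cell. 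Finally I would check that this assignment respects vertical composition and whiskering, which on the simplicial side are defined componentwise in $\mathbf{Shuff}$, $\mathcal{C}^\dagger$ and $\mathcal{D}^\dagger$, and construct the inverse by reading a reduced pasting in $\mathcal{C}\boxtimes_{cmp}\mathcal{D}$ back into shuffle-plus-icon data.

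The main obstacle I expect is well-definedness of $\Phi$ on 2-cells: a given simplicial 2-cell corresponds to many different pasting expressions in $\mathcal{F}\mathcal{G}$, and I must show they all collapse to one class modulo the relations. Equivalently, on the inverse side, distinct reduced pastings must yield the same shuffle morphism and icons. This is where the coequalizer defining $(\mathcal{F}\mathcal{G})(G,G')$ (the middle-four-interchange quotient) and the full list of swap-compatibility identifications earn their keep; the cleanest route is probably to pick a canonical normal form for 2-cells — for instance, perform all $\mathbf{swap}$'s first to reach the target interleaving, then all icon/$\mathbf{comp}$/$\mathbf{id}$ moves — and prove every expression reduces to it, so that $\Phi$ and its inverse are mutually inverse bijections on each hom-category and hence assemble into a 2-functorial isomorphism.
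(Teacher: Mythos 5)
Your proposal is correct and follows essentially the same route as the paper: the same identification of 1-cells with interleaved paths, the same decomposition of a simplicial 2-cell into whiskered elementary generators (crossings first, then $\mathbf{id}$/$\mathbf{comp}$ comparisons, then icon components) and the same normal-form/``bubble-sort'' argument via the relations (\ref{eq:TensorId1})--(\ref{eq:Tensor1CompSwap}). The only difference is packaging: the paper builds the comparison as a computad map $T:\mathcal{G}\rightarrow\mathcal{U}(\mathcal{C}\boxtimes_{sim}\mathcal{D})$ and shows that $\mathcal{C}\boxtimes_{sim}\mathcal{D}$ enjoys the universal property of $\mathcal{C}\boxtimes_{cmp}\mathcal{D}$ (every relation-respecting $\hat V:\mathcal{F}\mathcal{G}\rightarrow\mathcal{E}$ factors uniquely through $\hat T$), rather than writing down your $\Phi$ and its inverse explicitly.
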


We shall define a computad morphism $T:\mathcal{G}\rightarrow \mathcal{U}(\mathcal{C}\boxtimes_{sim} \mathcal{D})$, show that the induced strict 2-functor $\hat T:\mathcal{F}\mathcal{G}\rightarrow \mathcal{C}\boxtimes_{sim} \mathcal{D}$ respects the identifications (\ref{eq:TensorId1})-(\ref{eq:Tensor1CompSwap}), and that any other 2-functor
$\hat V:\mathcal{F}\mathcal{G}\rightarrow \mathcal{E}$  respecting them factors uniquely through $\hat T$. Then, from the universal property of $\mathcal{C}\boxtimes_{cmp} \mathcal{D}$ it will follow that $\mathcal{C}\boxtimes_{cmp} \mathcal{D} \cong \mathcal{C}\boxtimes_{sim} \mathcal{D}$.

The computad morphism  $T:\mathcal{G}\rightarrow \mathcal{U}(\mathcal{C}\boxtimes_{sim} \mathcal{D})$ is defined on nodes by
\begin{align}
T(C\boxtimes D) &= (C;D)\,,
\end{align}
on edges by
\begin{align}
T(C\boxtimes d) &= (0,\{C\},\sigma_0^1;
					1,\{D\xrightarrow{d} D'\},1_{[1]})\\
T(c\boxtimes D) &= (1,\{C\xrightarrow{c} C'\},1_{[1]};
					0,\{D\},\sigma_0^1)\,,
\end{align}
on 2-cells inherited from $\mathcal{C}$ and $\mathcal{D}$ by
\begin{align} \label{eq:elemStart}
T(C\boxtimes \delta) = 
	(1_{[0]},\{\};1_{[1]},\{\delta\})  &:  
		(0,\{C\},\sigma_0^1;
		 1,\{D\xrightarrow{d} D'\},1_{[1]})\nonumber\\
		&\Rightarrow  
		(0,\{C\},\sigma_0^1;
		 1,\{D\xrightarrow{\bar d} D'\},1_{[1]})\\
T(\gamma \boxtimes D) = (1_{[1]},\{\gamma\};1_{[0]},\{\})&:
		(1,\{C\xrightarrow{c} C'\},1_{[1]};
		 0,\{D\},\sigma_0^1)\nonumber\\
		&\Rightarrow
		(1,\{C\xrightarrow{\bar c} C'\},1_{[1]};
		 0,\{D\},\sigma_0^1)
\end{align}
and on the comparison and swapping 2-cells by
\begin{align}
T(\mathbf{id}_{1_C D}) = 
	(\sigma_0^1,\{1_{1_C}\};1_{[0]},\{\})&: 
		(0,\{C\},1_{[0]};0,\{D\},1_{[0]})\nonumber\\
		&\Rightarrow
		(1,\{C\xrightarrow{1_C}C\},1_{[1]};
		 0,\{D\},\sigma_0^1)\\
T(\mathbf{id}_{C 1_D}) = 
	(1_{[0]},\{\};\sigma_0^1,\{1_{1_D}\})&:
		(0,\{C\},1_{[0]};0,\{D\},1_{[0]})\nonumber\\
		&\Rightarrow
		(0,\{C\},\sigma_0^1;1,\{D\xrightarrow{1_D}D\},1_{[1]})
\end{align}
\begin{align}
T(\mathbf{comp}_{c,c',D}) =& 
	(\partial_1^2,\{1_{c'\circ c}\};1_{[0]},\{\}):\nonumber\\
		&(2,\{C\xrightarrow{c}C'\xrightarrow{c'}C''\},1_{[2]};
			0,\{D\},!_{[2]\rightarrow [0]})\nonumber\\
		&\Rightarrow
		(1,\{C\xrightarrow{c'\circ c}C''\},1_{[2]};
			0,\{D\},\sigma_0^1)\\
T(\mathbf{comp}_{C,d,d'}) =&
	(1_{[0]},\{\};\partial_1^2,\{1_{d'\circ d}\}):\nonumber\\
		&(0,\{C\},!_{[2]\rightarrow [0]};
			2,\{D\xrightarrow{d}D'\xrightarrow{d'}D''\},1_{[2]})\nonumber\\
		&\Rightarrow
		(0,\{C\},\sigma_0^1;1,\{D\xrightarrow{d'\circ d}D''\},1_{[1]})
\end{align}
\begin{align}
T(\mathbf{swap}_{c,d}) = &
	(1_{[1]},\alpha=\{1_c\}; 1_{[1]}, \beta=\{1_d\}):\nonumber\\ 
		&(1,\{C\xrightarrow{c}C'\},\sigma_1^2;
			1,\{D\xrightarrow{d} D'\},\sigma_0^2)\nonumber\\
		&\Rightarrow  
		(1,\{C\xrightarrow{c}C'\},\sigma_0^2;
			1,\{D\xrightarrow{d} D'\},\sigma_1^2)\,.\label{eq:elemEnd}
\end{align}
To check that the last 2-cell is the valid one, write equation (\ref{eq:shufmornat}) as
\begin{align}
	L\sigma_1^2\circ 1\circ\sigma_0^2 = \partial_2^2\circ\sigma_0^2
	\Rightarrow \partial_0^2\circ \sigma_1^2
			= R\sigma_0^2\circ 1\circ\sigma_1^2\,.
\end{align}
The cells on the RHS of (\ref{eq:elemStart})-(\ref{eq:elemEnd}) will be called \textit{elementary 2-cells}.

To see that the induced strict 2-functor respects identifications (\ref{eq:TensorId1})-(\ref{eq:Tensor1CompSwap}), note that $T(\mathbf{id})$,  $T(\mathbf{comp})$, and  $T(\mathbf{swap})$ have trivial icon components, while the definition of $T$ on other parts of the computad have trivial components in $\mathbf{Shuff}$, and that the composition of 2-cells in $\mathcal{C} \boxtimes_{sim} \mathcal{D}$ is done independently in each of the components.

Given  a computad map $V:\mathcal{G}\rightarrow \mathcal{U}\mathcal{E}$, such that $\hat V:\mathcal{F}\mathcal{G}\rightarrow \mathcal{E}$ respects the identifications (\ref{eq:TensorId1})-(\ref{eq:Tensor1CompSwap}), form the following assignments $W:\mathcal{C} \boxtimes_{sim} \mathcal{D}\rightarrow \mathcal{E}$ on objects
\begin{align}
	W(C;D) &= V(C\boxtimes D)
\end{align}
and on elementary arrows
\begin{align}
	W(0,\{C\},\sigma_0^1;1,\{D\xrightarrow{d} D'\},1_{[1]})&= 
		W(T(C\boxtimes d)) = V(C\boxtimes d)\\
	W(1,\{C\xrightarrow{c} C'\},1_{[1]};0,\{D\},\sigma_0^1)&=
		W(T(c\boxtimes D)) = V(c\boxtimes D)\,.
\end{align}
Since every shuffle can be written uniquely as a sum of shuffles of unit length, the above assignment determines assignment on all 1-cells; given $(n,p,r;m,q,s)$, assign to it the composite given by (\ref{eq:W1cellAssig}).
\begin{align}\label{eq:W1cellAssig}
	W(n,p,r;m,q,s) &= \circ_{i=n+m}^{1}
	\begin{cases}
	   V((p)_i\boxtimes qsi)\mathrm{, if }s_i=0\\
	   V(pri\boxtimes (q)_i)\mathrm{, if }r_i=0
	\end{cases}
\end{align}
When $n=m=0$ we get that $W$ preserves identities; that is,
\begin{align}
W(1_{(C;D)})=1_{W(C;D)}\,.
\end{align}
Also, $W$ preserves composition
\begin{align}
&W(n',p',r';m',q',s')
	\circ W(n,p,r;m,q,s)=\nonumber\\
&\circ_{i'=n'+m'}^{1}
	\begin{cases}
	   V((p')_{i'}\boxtimes q's'i')\mathrm{, if }s'_{i'}=0\\
	   V(p'r'i'\boxtimes (q')_{i'})\mathrm{, if }r'_{i'}=0
	\end{cases}
\circ_{i=n+m}^{1}
	\begin{cases}
	   V((p)_i\boxtimes qsi)\mathrm{, if }s_i=0\\
	   V(pri\boxtimes (q)_i)\mathrm{, if }r_i=0
	\end{cases}\\
&=\circ_{i=n'+m'+n+m}^{1}
	\begin{cases}
		V((p')_{i}\boxtimes q's'i)\mathrm{, if }s'_{i}=0\mathrm{, and }i>n+m\\
		V(p'r'i\boxtimes (q')_{i})\mathrm{, if }r'_i=0\mathrm{, and }i>n+m\\
		V((p)_i\boxtimes qsi)\mathrm{, if }s_i=0\mathrm{, and }i\leq n+m\\
		V(pri\boxtimes (q)_i)\mathrm{, if }r_i=0\mathrm{, and }i\leq n+m
	\end{cases}\\
&=W(n'+n,p'+p,r'+r;m'+m,q'+q,s'+s)\\
&=W((n',p',r';m',q',s')
	\circ (n,p,r;m,q,s))\,.
\end{align}
Hence, it is a functor on the underlying categories.

The requirement that $WT=V$ determines the assignment on identities
\begin{align}
T(1_g)=1_{Tg}
\end{align}
on elementary 2-cells $T\pi$
\begin{equation}
W(T\pi)=V(\pi)
\end{equation}
and similarly on whiskered elementary 2-cells
\begin{equation}
W(Tg''\circ T\pi \circ Tg ):=Vg''\circ V\pi \circ Vg=V(g''\circ \pi \circ g)
\end{equation}
where $T\pi$ is an elementary 2-cell and $Tg$ and $Tg'$ are 1-cells.

Given any 2-cell $(\xi,\alpha;\rho,\beta)$, as in (\ref{eq:ten2cell}), choose a decomposition into whiskered elementary 2-cells in the following order, starting from the target 1-cell,
\begin{itemize}
\item elementary $\beta$, $j=\bar m,..,1$
\begin{align}
J_j&=1\circ T(\bar p \bar r j\boxtimes \beta_j)\circ 1\\
	&=(1_{[\bar n]},\{1_{p_1},..,1_{p_{\bar n}}\};
	  1_{[\bar m]},\{1_{q_1},..,\beta_j,..,1_{q_{\bar n}}\})
\end{align}
\item elementary $\alpha$, $i=\bar n,..,1$
\begin{align}
I_i&=1\circ T(\alpha_i \boxtimes \bar q \bar s i)\circ 1\\
	&=(1_{[\bar n]},\{1_{p_1},..,\alpha_i,..,1_{p_{\bar n}}\};
		1_{[\bar m]},\{1_{q_1},..,1_{q_{\bar n}}\})
\end{align}
\item comparisons in $\mathcal{D}$, $j=\bar m,..,1$
\begin{itemize}
	\item if $\bar\rho_j=0$ then 
	\begin{equation}
	L_{j,1}=1\circ T(\mathbf{id})\circ 1=:L^{(\mathbf{id})}_j
	\end{equation}
	\item if $\bar\rho_j\geq 2$, $k=\bar{\rho_j}-1,...,1$
	\begin{equation}
		L_{j,k}=1\circ T(\mathbf{comp})\circ 1=:L^{(\mathbf{comp})}_{j,k}
	\end{equation}
	This order corresponds to left bracketing.
	\item if $\bar\rho_j=1$ then $L_{j,1}=1$, and can be ignored.
\end{itemize}
\item comparisons in $\mathcal{C}$, $i=\bar n,..,1$
\begin{itemize}
\item if $\bar\xi_i=0$ then 
\begin{equation}
	K_{i,1}=1\circ T(\mathbf{id})\circ 1=:K^{(\mathbf{id})}_{j}
\end{equation}
\item if $\bar\xi_j\geq 2$, $k=\bar{\xi_j}-1,...,1$
	\begin{equation}
		K_{i,k}=1\circ T(\mathbf{comp})\circ 1=:K^{(\mathbf{comp})}_{j,k}
	\end{equation}
	This order corresponds to left bracketing.
\item if $\bar\xi_j=1$ then $K_{j,1}=1$, and can be ignored.
\end{itemize}
\item crossings - the remaining 2-cell to decompose has trivial icon components as well as trivial $\xi$ and $\rho$. In the relation tables - which define the two shuffles - elementary crossings correspond to switching ones to zeros, or, going backwards, switching zeros to ones. Let $(x,y)$ be the coordinates of the corresponding crossings, order them by $x-y$ and then (if the $x-y$ value is the same) by $x+y$. Our backward decomposition starts with the last crossing in the table. Denote them by $S_i$. 
\end{itemize}
Now, define
\begin{align}
W(\xi,\alpha;\rho,\beta)=
\circ_{i}W(J_i)
\circ_{i}W(I_i)
\circ_{i,j}W(L_{i,j})
\circ_{i,j}W(K_{i,j})
\circ_{i}W(S_i)
\end{align}
Given a composable pair of 2-cells, the composite of their their images under $W$, $W(\bar\xi,\bar\alpha;\bar\rho,\bar\beta)\circ W(\xi,\alpha;\rho,\beta)$, is equal to
\begin{align}  
\circ_{i}W(\bar J_i)
\circ_{i}W(\bar I_i)
\circ_{i,j}W(\bar L_{i,j})
\circ_{i,j}W(\bar K_{i,j})
\circ_{i}W(\bar S_i)\nonumber\\
\circ_{i}W(J_i)
\circ_{i}W(I_i)
\circ_{i,j}W(L_{i,j})
\circ_{i,j}W(K_{i,j})
\circ_{i}W(S_i)\label{eq:nonCanDec}
\end{align} 
which need not be in the canonical form. The assignment on the composite 2-cell 
\begin{equation}
(\xi\circ\bar\xi,\bar\alpha\bullet(\alpha\circ\bar{\xi});\rho\circ\bar\rho,\bar{\beta}\bullet(\beta\circ\bar{\rho}))
\end{equation}
is in the canonical form, and the two are equal which we show by ``bubble-sorting'' the decomposition (\ref{eq:nonCanDec}). In each step one of two cases can happen:
\begin{itemize}
\item the output (target of the elementary part) of the first 2-cell does not overlap with the input (source of the elementary part) of the second 2-cell.  Then we can write the vertical composite of their images as
\begin{align}
&W(Tg_5\circ T\bar g_4\circ Tg_3\circ T\pi_2 \circ Tg_1)\nonumber\\
&\bullet
W(Tg_5\circ T \pi_1 \circ Tg_3\circ Tg_2 \circ Tg_1)\nonumber\\
&=V(g_5\circ \pi_1 \circ g_3 \circ \pi_2 \circ g_1)=\nonumber\\
&W(Tg_5\circ T\pi_1 \circ Tg_3\circ T\bar g_2 \circ Tg_1)\nonumber\\
&\bullet
W(Tg_5\circ Tg_4 \circ Tg_3\circ T\pi_2 \circ Tg_1)
\end{align} 
meaning that we can change the order of their composition after suitably changing the whiskering 1-cells.
\item the output of the first 2-cell overlaps with the input of the second 2-cell. Depending on which elementary 2-cells meet, do an operation according to the following table.
\begin{equation}
\arraycolsep=1.1pt\def\arraystretch{1.2}
\begin{array}{|c|c|c|c|c|c|c|c|}
\hline
1^{st}\backslash 2^{nd}&
\bar J&
\bar I&
\bar L^{(\mathbf{id})}&
\bar L^{(\mathbf{comp})}&
\bar K^{(\mathbf{id})}&
\bar K^{(\mathbf{comp})}&
\bar S\\
\hline
J&
(\ref{eq:TensorDist1})&
\bot&
\bot&
(\ref{eq:TensorComp1})&
\bot&
\bot&
\bot/(\ref{eq:TensorSwap})\\
\hline
I&
\bot&
(\ref{eq:TensorDist2})&
\bot&
\bot&
\bot&
(\ref{eq:TensorComp2})&
(\ref{eq:TensorSwap})/\bot\\
\hline
L^{(\mathbf{id})}&
R&
\bot&
\bot&
(\ref{eq:TensorUnit})&
\bot&
\bot&
\bot/(\ref{eq:Tensor1IdSwap})\\
\hline
L^{(\mathbf{comp})}&
R&
\bot&
\bot&
R/(\ref{eq:TensorAssoc})&
\bot&
\bot&
\bot/(\ref{eq:Tensor1CompSwap})\\
\hline
K^{(\mathbf{id})}&
\bot&
R&
\bot&
\bot&
\bot&
(\ref{eq:TensorUnit})&
(\ref{eq:TensorId1Swap})/\bot\\
\hline
K^{(\mathbf{comp})}&
\bot&
R&
\bot&
\bot&
\bot&
R/(\ref{eq:TensorAssoc})&
(\ref{eq:TensorComp1Swap})/\bot\\
\hline
S&
\bot / R&
R / \bot&
\bot&
\bot / \bot / R&
\bot&
R/\bot/\bot&
R/\bot/\bot\\
\hline
\end{array} 
\end{equation}
If the first 2-cell has $n$ outputs and the second 2-cell has $m$ inputs, there are $n+m-1$ ways to match them. When different, these cases are separated by ``$/$''. The symbol $\bot$ denotes that matching is not possible for that case, and $R$ denotes that the matching is possible, but the order is already correct (lower triangle). Finally, an equation number tells us to apply apply $\hat T$ to both sides, and substitute the LHS, which appears in the composition, with the RHS. Each step changes the decomposition of the 2-cell, and the fact that $\hat{V}$ preserves relations ensures that the composite in $\mathcal{E}$ does not change.
\end{itemize}
This proves that $W$ is functorial on homs.

A 2-cell in $\mathcal{C}\boxtimes \mathcal{D}$, obtained by whiskering, has the same elementary 2-cells in its decomposition as the original 2-cell. Hence, the two different composites 
\begin{equation}
(WT\bar g'\circ W(\xi,\alpha;\rho,\beta)) \bullet
(W(\xi',\alpha';\rho',\beta')\circ WTg)
\end{equation}
and
\begin{equation}
(W(\xi',\alpha';\rho',\beta')\circ WT\bar g) \bullet
(WTg'\circ W(\xi,\alpha;\rho,\beta))
\end{equation}
necessarily bubble-sort to $W((\xi',\alpha';\rho',\beta')\circ (\xi,\alpha;\rho,\beta))$. This completes the proof that $W$ is a 2-functor.

The functor $\hat T$ is bijective on objects and arrows, and surjective on 2-cells, so $W$ is the unique 2-functor satisfying $\hat V=W\hat T.$

\subsection{Mixed tensor product}

The case covering the free mixed distributive law, strictifying 
$\textrm{Lax}(\mathcal{C},\textrm{OpLax}(\mathcal{D},\mathcal{E}))$, produces $\mathcal{C}\boxtimes^\mathrm{m}_\mathrm{dir}\mathcal{D}$ that has the same objects and arrows as $\mathcal{C}\boxtimes\mathcal{D}$, and 2-cells differ by changing the direction of  $\rho:[m]\rightarrow [\bar m]$ to accommodate comultiplication and counit, change in icon $\beta:q\Rightarrow\bar q \rho:[m]\rightarrow\mathcal{D}$, with the restriction for crossings taking a slightly different form
\begin{equation}
Lr \circ \xi \circ \bar r\Rightarrow Rs \circ R\rho \circ \bar s\,.
\end{equation}
With a proof following the same steps as the non-mixed case, we state the following proposition.
\begin{prop}
There is an isomorphism
\begin{equation}
\mathcal{C}\boxtimes_\mathrm{dir}^\mathrm{m} \mathcal{D} \cong \mathcal{C}\boxtimes_\mathrm{cmp}^\mathrm{m} \mathcal{D}\,.
\end{equation}
\end{prop}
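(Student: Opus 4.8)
The plan is to follow verbatim the strategy used to prove Theorem \ref{thm:mainIso}, altering only the data attached to the reversed \textbf{(f1)} cells. First I would define a computad morphism $T^\mathrm{m}:\mathcal{G}_m\rightarrow\mathcal{U}(\mathcal{C}\boxtimes^\mathrm{m}_\mathrm{dir}\mathcal{D})$. On objects, edges, and the cells $C\boxtimes\delta$, $\gamma\boxtimes D$, $\mathbf{id}_{1_CD}$, $\mathbf{comp}_{cc'D}$ and $\mathbf{swap}_{cd}$, the assignment is identical to that of $T$ in (\ref{eq:elemStart})--(\ref{eq:elemEnd}), since none of these involve the reversed cells. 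The only change is for $\mathbf{id}_{C1_D}$ and $\mathbf{comp}_{Cdd'}$, which now point in the opposite direction: their images must use the same simplicial maps $\sigma^1_0$ and $\partial^2_1$, but now read as the $\rho$-component $[m]\rightarrow[\bar m]$ rather than $[\bar m]\rightarrow[m]$, and paired with the reversed icon $\beta:q\Rightarrow\bar q\circ\rho$. Concretely, $T^\mathrm{m}(\mathbf{id}_{C1_D})$ is the 2-cell whose $\mathcal{D}$-factor has $\rho=\sigma^1_0$ and trivial $\beta$, and $T^\mathrm{m}(\mathbf{comp}_{Cdd'})$ the one with $\rho=\partial^2_1$; both retain trivial shuffle- and $\mathcal{C}$-components.

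Second, I would check that the induced $\hat T^\mathrm{m}:\mathcal{F}\mathcal{G}_m\rightarrow\mathcal{C}\boxtimes^\mathrm{m}_\mathrm{dir}\mathcal{D}$ respects the modified identifications. The argument is unchanged: each elementary image has trivial components in all but one factor, and composition in $\mathcal{C}\boxtimes^\mathrm{m}_\mathrm{dir}\mathcal{D}$ is performed independently componentwise, so reversing the roles of $\rho$ and $\beta$ is transparent to this verification.

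Third, given a computad map $V:\mathcal{G}_m\rightarrow\mathcal{U}\mathcal{E}$ whose extension respects those identifications, I would build $W:\mathcal{C}\boxtimes^\mathrm{m}_\mathrm{dir}\mathcal{D}\rightarrow\mathcal{E}$ by the very same formula (\ref{eq:W1cellAssig}) on $1$-cells, which is legitimate because the underlying $1$-categories of the mixed and non-mixed products coincide; functoriality on $1$-cells is then literally the computation already carried out. On $2$-cells I would decompose $(\xi,\alpha;\rho,\beta)$ into the whiskered elementary pieces $J,I,L,K,S$ in the same prescribed order, set $W$ on them by $W(T^\mathrm{m}\pi)=V(\pi)$, and prove functoriality on homs by bubble-sorting the composite decomposition against the canonical one.

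The main obstacle is re-establishing the bubble-sort table for $\mathcal{G}_m$. Because the \textbf{(f1)} cells are reversed, the entries in the rows and columns labelled $L^{(\mathbf{id})}$ and $L^{(\mathbf{comp})}$, together with their meetings with $\bar S$, must be recomputed against the modified identifications of $\mathcal{C}\boxtimes^\mathrm{m}_\mathrm{cmp}\mathcal{D}$; in particular the counterparts of the swap-compatibility relations (\ref{eq:TensorComp1Swap})--(\ref{eq:Tensor1CompSwap}) now involve comultiplication and counit in the $\mathcal{D}$-direction and acquire reversed orientations, so each affected cell of the table has to be verified to be either impossible ($\bot$) or resolvable by exactly one identification. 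One must also confirm that the mixed crossing condition, reading $Lr\circ\xi\circ\bar r\Rightarrow Rs\circ R\rho\circ\bar s$ in place of (\ref{eq:2cellneq}), still induces a consistent linear order on the crossings $S_i$, so that the bubble-sort terminates at the canonical form. Once the table is confirmed entry by entry, the whiskering compatibility and uniqueness of $W$ follow exactly as before, from $\hat T^\mathrm{m}$ being bijective on objects and arrows and surjective on $2$-cells; the universal property of $\mathcal{C}\boxtimes^\mathrm{m}_\mathrm{cmp}\mathcal{D}$ then yields the claimed isomorphism.
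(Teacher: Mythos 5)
Your proposal follows exactly the strategy the paper intends: the paper's own "proof" is just the remark that one repeats the steps of Theorem \ref{thm:mainIso} with the $\rho$-direction and the \textbf{(f1)} cells reversed, and you have correctly identified precisely which pieces of that argument (the images of $\mathbf{id}_{C1_D}$ and $\mathbf{comp}_{Cdd'}$, the affected rows of the bubble-sort table, and the mixed crossing condition) need re-verification. This matches the paper's approach, only spelled out in more detail than the paper itself provides.
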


\section{Some properties and an example}

\subsection{Lax monoidal structure}

In this section we will recall the universal property of lax Gray tensor product \cite{Gray1974}, and use it together with B\'enabou construction of paths from Section \ref{sec:paths} to describe lax monoidal structure on the category of 2-categories and lax functors.

Let $\textrm{L2-Cat}$ denote the category of (small) 2-categories and lax functors, while $\textrm{2-Cat}$ denotes denote the subcategory of $\textrm{L2-Cat}$ consisting of strict 2-functors. The inclusion $i_0:\textrm{2-Cat}\hookrightarrow \textrm{L2-Cat}$ has a left adjoint:
\begin{itemize}
\item There is an assignment on objects $(-)^\dagger:\textrm{L2-Cat}\hookrightarrow \textrm{2-Cat}$ 
(Benabou strictification construction, Section \ref{sec:paths})
\item For each $\mathcal{C}$ there is an universal L2-Cat arrow (lax functor) $\eta_\mathcal{C}:\mathcal{C}\rightarrow \mathcal{C}^\dagger$, meaning, each lax functor $F:\mathcal{C}\rightarrow \mathcal{D}$ gives rise to a unique strict functor
\begin{align}\label{eq:strict}
s_0F:\mathcal{C}^\dagger\rightarrow \mathcal{D}
\end{align}
satisfying
$
s_0F\circ\eta_\mathcal{C}=F\,.
$ 
\end{itemize}
In fact, one could define computad presentation of $(-)^\dagger$ and analogously to proofs of Proposition \ref{prop:compIso} and Theorem \ref{thm:mainIso} show that
\begin{equation}\label{eq:ben}
\mathrm{Lax}(\mathcal{C},\mathcal{E})\cong [\mathcal{C}^\dagger,\mathcal{E}]_\mathrm{lnt}\,.
\end{equation}

Lax Gray tensor product, $\otimes_l :\textrm{2-Cat}\times \textrm{2-Cat}\rightarrow \textrm{2-Cat}$, is a tensor product for the internal hom $[-,-]_\mathrm{lnt}$, that is
\begin{equation}\label{eq:gray}
[\mathcal{C},[\mathcal{D},\mathcal{E}]_\mathrm{lnt}]_\mathrm{lnt}
\cong [\mathcal{C}\otimes_l\mathcal{D},\mathcal{E}]_\mathrm{lnt}\,.
\end{equation}

The left hand side of Eq.~(\ref{eq:mainiso}) can be transformed
\begin{align}
\mathrm{Lax}(\mathcal{C},\mathrm{Lax}(\mathcal{D},\mathcal{E}))
	&\stackrel{\mathrm{(\ref{eq:ben})}}{\cong} [\mathcal{C}^\dagger,[\mathcal{D}^\dagger,\mathcal{E}]_\mathrm{lnt}]_\mathrm{lnt}\\
	&\stackrel{\mathrm{(\ref{eq:gray})}}{\cong} [\mathcal{C}^\dagger\otimes_l\mathcal{D}^\dagger,\mathcal{E}]_\mathrm{lnt}
\end{align}
leading to the third description of the tensor product\footnote{The lax Gray product $\otimes_l$ is defined via its universal property, and the explicit description involves relations and quotienting. Our direct description, explained in Section \ref{sec:direct}, involves no quotienting.}
\begin{equation}
	\mathcal{C}\boxtimes \mathcal{D} \cong \mathcal{C}^\dagger\otimes_l\mathcal{D}^\dagger\,.
\end{equation}

From 2-monadic point of view, the $\otimes_l$ is a pseudo algebra on $\textrm{2-Cat}$ for the monoidal category 2-monad on $\textrm{CAT}$. The adjunction $(-)^\dagger \dashv i_0$ induces a lax algebra structure on $\textrm{L2-Cat}$ given by
\begin{equation}
\boxtimes_n (\mathcal{C}_1,\ldots,\mathcal{C}_n):= \mathcal{C}_1^\dagger \otimes_\text{l}\ldots\otimes_\text{l}\mathcal{C}_n^\dagger\,.
\end{equation}

\subsection{Generalization of the composite monad}

There is an obvious 2-functor $L:\mathcal{C}\boxtimes \mathcal{D}\rightarrow\mathcal{C}\times\mathcal{D}$ that forgets shuffles and composes paths. It has a right adjoint $R$ in the 2-category of 2-categories, lax functors and icons:
\begin{align}
\mathcal{C} \times \mathcal{D} & \xrightarrow{R}  \mathcal{C} \boxtimes \mathcal{D}\\
(C,D) &\mapsto C\boxtimes D\\
(c,d) &\mapsto CD\xrightarrow{Cd}CD'\xrightarrow{cD'}C'D'\\
(\gamma,\delta) &\mapsto (\gamma\boxtimes D') \circ (C\boxtimes \delta)
\end{align}
with identity and composition comparison maps
\begin{align}
!&:1_{C \boxtimes D}  \Rightarrow CD\xrightarrow{C 1_D}CD\xrightarrow{1_CD}CD\\
(\partial^2_1,{1};\partial^2_1,{1})&: CD\xrightarrow{Cd}CD'\xrightarrow{cD'}C'D'\xrightarrow{C'd'}C'D''
\xrightarrow{c'D''}C''D''\\
& \Rightarrow  CD\xrightarrow{C(d'\circ d)}CD''\xrightarrow{(c'\circ c)D''}C''D''\,.
\end{align}
The composite $L\circ R$ is just the identity functor $1_{\mathcal{C}\times \mathcal{D}}$, while the unit of the adjunction is an icon
\begin{equation}\label{eq:unitRL}
\eta:1_{\mathcal{C}\boxtimes \mathcal{D}}\Rightarrow R\circ L
\end{equation}
assigning to each arrow $(n,p,r;m,q,s)$ in $\mathcal{C}\boxtimes \mathcal{D}$ a 2-cell
\begin{align}
(!_{[1]\rightarrow[n]},1_{\circ p};!_{[1]\rightarrow[m]},1_{\circ q}):(n,p,r;m,q,s)\Rightarrow (1,\circ p,\sigma^2_0;1,\circ q,\sigma^2_1)\,.
\end{align}
Whiskering $\eta$ on the left (resp. right) by $L$ (resp. $R$) gives the identity on $L$ (resp. $R$), proving the adjunction axioms.
 
Any strict functor $\hat B:\mathcal{C}\boxtimes \mathcal{D}\rightarrow \mathcal{E}$ can be precomposed with $R$ to give a lax functor
\begin{align}
\hat B\circ R:\mathcal{C}\times \mathcal{D}\rightarrow \mathcal{E}\,.
\end{align}
This generalizes the notion of a composite monad induced by a distributive law.

\subsection{Parametrizing parametrization of categories}

 Take $\mathcal{C}$ and $\mathcal{D}$ to be just categories (seen as locally discrete 2-categories), and\footnote{Instead of $\mathrm{Span}$ one can take a strict version with objects sets $X,Y...$ and arrows cocontinuous functors $\mathrm{Set}/X\rightarrow \mathrm{Set}/Y$ which are determined by the assignment of singletons.}
 $\mathcal{E}=\mathrm{Span}$.

The bicategory of spans is equivalent to the bicategory of matrices, which is in turn a full subcategory of\footnote{Consisting of categories and modules (aka profunctors or distributors)}
$\mathrm{Mod}$. Each strict functor $\hat B:\mathcal{C}\boxtimes \mathcal{D}\rightarrow \mathrm{Span}$ is, in particular, a normal lax functor, so we can use the B\'enabou construction \cite{Street2001} (after forgetting 2-cells) to obtain a category $\tilde{B}_\mathrm{nerve}$ parametrised over $\mathcal{C}\boxtimes \mathcal{D}$. Explicitly, $\tilde{B}_\mathrm{nerve}$ has objects over $C\boxtimes D$ given by the set $BCD$. Arrows over $C\boxtimes d$ and $c\boxtimes D$ are elements of spans $BCd$ and $BcD$ respectively, and they generate arrows over arbitrary paths, which are, due to composition in $\mathrm{Span}$, composable tuples.

The 2-cells that we have temporarily forgotten are mapped to span morphisms. In particular, the images $\hat B\eta_p$ of the unit of the adjunction (\ref{eq:unitRL}) give a unique way of ``composing'' arbitrary arrows in $\tilde{B}_\mathrm{nerve}$, resulting in an arrow over a path in $\mathcal{C}\boxtimes \mathcal{D}$ of the form $CD\xrightarrow{Cd}CD'\xrightarrow{cD}C'D'$. The image of this assignment forms a category $\tilde{B}$ whose composition is concatenation in $\tilde{B}_\mathrm{nerve}$ followed by applying (the unique) appropriate $B\eta$. Uniqueness guarantees the identity and associativity laws.

Explicitly, $\tilde B$ with the same objects as $\tilde{B}_\mathrm{nerve}$, and arrows between $X\in B(C\boxtimes D)$ and $X'\in B(C'\boxtimes D')$ are elements of $B(CD\xrightarrow{Cd}CD'\xrightarrow{cD}C'D')$, denoted by pairs $(g,f)$,. The identity is
\begin{align}
1_X&=(1_X^D,1_X^C),\mathrm{ with}\\
1_X^D&:=(B\mathbf{id}_{C1_D})(X)\\
1_X^C&:= (B\mathbf{id}_{1_CD})(X)
\end{align}
and composition is given by
\begin{align}
(g',f')\circ (g,f)=B((\mathbf{comp}\circ\mathbf{comp})\bullet (1\circ \mathbf{swap}\circ 1))(g',f',g,f)\,.
\end{align}

For each object $D\in \mathcal{D}$ we get a subcategory $\pi_D\tilde{B}$ parametrized by $\mathcal{C}$ - an object $X$ over $C$ is an element of $BCD$, and arrow $f:X\rightarrow X'$ over $c$ is an element of $BcD$, which can be identified with an arrow $(1_X^D,f)$ of $\tilde B$. Similarly, each object $C\in \mathcal{C}$ gives a subcategory $\pi_C\tilde{B}$, parametrized by $\mathcal{D}$.
Furthermore, each arrow $(g,f)$ in $\tilde{B}$ can be decomposed as
\begin{equation}
(1_{D'},f)\circ(g,1_C)
\end{equation}
or as
\begin{equation}
(g,1_{C'})\circ(1_{D},f)\,.
\end{equation}

\appendix

\section{Simplices, intervals and shuffles} \label{sec:simpaths}

The {\it algebraist's delta}, denoted by $\Delta_{a}$, is the full subcategory of $\mathbf{Cat}$ consisting of categories
$\langle n \rangle$ whose objects are numbers $0,...,n-1$ and 1-cells are unique $i\rightarrow j$ when $i\leq j$. The empty category is denoted $\langle 0 \rangle$. Arrows between $\langle n \rangle$ and $\langle n' \rangle$ are functors; that is, order preserving functions, generated by face and degeneracy maps
\begin{align}
\sigma_i^n&:\langle n+1 \rangle  \rightarrow\langle n \rangle ,\, i=0,\ldots,n-1\\
\partial_i^n&:\langle n \rangle \rightarrow\langle n+1 \rangle ,\, i=0,\ldots,n
\end{align}
which can be presented in a diagram
\begin{equation}\label{diag:Deltaa}
\begin{tikzpicture}[
baseline=(current  bounding  box.center)]
\def \strx {2};
\def \stry {0.5};
\def \spcnd {0.25};
\def \ellip {(0.2 and 0.2)}
\node (G0) at (0 * \strx ,0) {$\langle 0 \rangle $};
\node (G1) at (1 * \strx ,0) {$\langle 1 \rangle $};
\node (G2) at (2 * \strx ,0) {$\langle 2 \rangle $};
\node (G3) at (3 * \strx ,0) {$\langle 3 \rangle $};
\node (etc) at (3.5 * \strx ,0) {$\ldots$};

\path[->,font=\scriptsize,>=angle 90]
(0 * \strx + \spcnd * \strx ,0) edge (1 * \strx - \spcnd * \strx,0);
\draw[fill=white] (0.5 * \strx, 0 * \stry) ellipse \ellip [fill=white] node {\scriptsize $\partial_0^0$};

\path[->,font=\scriptsize,>=angle 90]
(1 * \strx + \spcnd * \strx , 1 * \stry) edge 
(2 * \strx - \spcnd * \strx, 1 * \stry);
\draw[fill=white] (1.5 * \strx, 1 * \stry) ellipse \ellip [fill=white] node {\scriptsize $\partial_1^1$};
\path[<-,font=\scriptsize,>=angle 90]
(1 * \strx + \spcnd * \strx ,0) edge 
(2 * \strx - \spcnd * \strx,0);
\draw[fill=white] (1.5 * \strx, 0 * \stry) ellipse \ellip [fill=white] node {\scriptsize $\sigma_0^1$};
\path[->,font=\scriptsize,>=angle 90]
(1 * \strx + \spcnd * \strx ,-1 * \stry) edge 
(2 * \strx - \spcnd * \strx,-1 * \stry);
\draw[fill=white] (1.5 * \strx, -1 * \stry) ellipse \ellip [fill=white] node {\scriptsize $\partial_0^1$};


\path[->,font=\scriptsize,>=angle 90]
(2 * \strx + \spcnd * \strx ,2 * \stry) edge
(3 * \strx - \spcnd * \strx, 2 * \stry);
\draw[fill=white] (2.5 * \strx, 2 * \stry) ellipse \ellip [fill=white] node {\scriptsize $\partial_2^2$};
\path[<-,font=\scriptsize,>=angle 90]
(2 * \strx + \spcnd * \strx , 1 * \stry) edge
(3 * \strx - \spcnd * \strx, 1 * \stry);
\draw[fill=white] (2.5 * \strx, 1 * \stry) ellipse \ellip [fill=white] node {\scriptsize $\sigma_1^2$};
\path[->,font=\scriptsize,>=angle 90]
(2 * \strx + \spcnd * \strx ,0 * \stry) edge
(3 * \strx - \spcnd * \strx, 0 * \stry);
\draw[fill=white] (2.5 * \strx, 0 * \stry) ellipse \ellip [fill=white] node {\scriptsize $\partial_1^2$};
\path[<-,font=\scriptsize,>=angle 90]
(2 * \strx + \spcnd * \strx ,-1 * \stry) edge
(3 * \strx - \spcnd * \strx,-1 * \stry);
\draw[fill=white] (2.5 * \strx, -1 * \stry) ellipse \ellip [fill=white] node {\scriptsize $\sigma_0^2$};
\path[->,font=\scriptsize,>=angle 90]
(2 * \strx + \spcnd * \strx ,-2 * \stry) edge
(3 * \strx - \spcnd * \strx,-2 * \stry);
\draw[fill=white] (2.5 * \strx, -2 * \stry) ellipse \ellip [fill=white] node {\scriptsize $\partial_0^2$};

\end{tikzpicture}
\end{equation}
A natural transformation between $f$ and $\bar f$, if one exists, is unique and witnesses that $fi\leq \bar fi$ for all $i$, turning  $\Delta_a [\langle n \rangle ,  \langle n' \rangle ]$ into a poset. The 2-category $\Delta_{a}$ is equipped with a strict monoidal structure, the ordinal sum $\oplus$.

\subsection{Intervals - free monoid}
\label{sec:FM}

Denote by $\Delta_{\bot\top}$ the subcategory of $\Delta_{a}$, called the category of intervals, consisting of relabelled objects 
\begin{equation}
[n]:= \langle n+1 \rangle ,\,n=0,1,... 
\end{equation}
and 1-cells that preserve the first and the last element; it is generated by the arrows from the inside of the diagram (\ref{diag:Deltaa}), represented by the bold part of
\begin{equation}
\begin{tikzpicture}[
baseline=(current  bounding  box.center)]
\def \strx {2};
\def \stry {0.5};
\def \spcnd {0.25};
\def \ellip {(0.2 and 0.2)}
\node (G0) at (0 * \strx ,0) {$\cdot$};
\node (G1) at (1 * \strx ,0) {$\mathbf{[0]}$};
\node (G2) at (2 * \strx ,0) {$\mathbf{[1]}$};
\node (G3) at (3 * \strx ,0) {$\mathbf{[2]}$};
\node (etc) at (3.5 * \strx ,0) {$\ldots$};

\path[->,font=\scriptsize,>=angle 90,line width=0.1pt]
(0 * \strx + \spcnd * \strx ,0) edge (1 * \strx - \spcnd * \strx,0);

\path[->,font=\scriptsize,>=angle 90,line width=0.1pt]
(1 * \strx + \spcnd * \strx , 1 * \stry) edge 
(2 * \strx - \spcnd * \strx, 1 * \stry);
\path[<-,font=\scriptsize,>=angle 90,line width=1pt]
(1 * \strx + \spcnd * \strx ,0) edge 
(2 * \strx - \spcnd * \strx,0);
\draw[fill=white] (1.5 * \strx, 0 * \stry) ellipse \ellip [fill=white] node {\scriptsize $\sigma_0^1$};
\path[->,font=\scriptsize,>=angle 90,line width=0.1pt]
(1 * \strx + \spcnd * \strx ,-1 * \stry) edge 
(2 * \strx - \spcnd * \strx,-1 * \stry);


\path[->,font=\scriptsize,>=angle 90,line width=0.1pt]
(2 * \strx + \spcnd * \strx ,2 * \stry) edge
(3 * \strx - \spcnd * \strx, 2 * \stry);
\path[<-,font=\scriptsize,>=angle 90,line width=1pt]
(2 * \strx + \spcnd * \strx , 1 * \stry) edge
(3 * \strx - \spcnd * \strx, 1 * \stry);
\draw[fill=white] (2.5 * \strx, 1 * \stry) ellipse \ellip [fill=white] node {\scriptsize $\sigma_1^2$};
\path[->,font=\scriptsize,>=angle 90,line width=1pt]
(2 * \strx + \spcnd * \strx ,0 * \stry) edge
(3 * \strx - \spcnd * \strx, 0 * \stry);
\draw[fill=white] (2.5 * \strx, 0 * \stry) ellipse \ellip [fill=white] node {\scriptsize $\partial_1^2$};
\path[<-,font=\scriptsize,>=angle 90,line width=1pt]
(2 * \strx + \spcnd * \strx ,-1 * \stry) edge
(3 * \strx - \spcnd * \strx,-1 * \stry);
\draw[fill=white] (2.5 * \strx, -1 * \stry) ellipse \ellip [fill=white] node {\scriptsize $\sigma_0^2$};
\path[->,font=\scriptsize,>=angle 90,line width=0.1pt]
(2 * \strx + \spcnd * \strx ,-2 * \stry) edge
(3 * \strx - \spcnd * \strx,-2 * \stry);
\end{tikzpicture}
\end{equation}

It is clear that suspension (moving nodes to the left) gives an isomorphism
\begin{align}
\Delta_{\bot\top}^{op} &\cong \Delta_{a}\label{eq:isoDelta}\\
{[}n{]}=\langle n+1 \rangle  &\mapsto  \langle n \rangle \\
\sigma^n_i &\mapsto  \partial_i^{n-1},\, i=0,\ldots,n-1\\
\partial^n_i &\mapsto  \sigma_{i-1}^{n-1},\, i=1,\ldots,n-1
\end{align}
The tensor product on $\Delta_{\bot\top}$ is inherited from the ordinal sum under the isomorphism (\ref{eq:isoDelta}), and has the interpretation of path concatination;
\begin{align}
\xi&:[n]\rightarrow[m]\\
\xi'&:[n']\rightarrow[m']
\end{align}
concatinate to
\begin{align}
\xi+\xi':[n+n']&\rightarrow[m+m']\\
i&\mapsto 
\begin{cases}
\xi(i),\mathrm{ if }i\leq n\\
\xi'(i-n),\mathrm{ otherwise.}
\end{cases}
\end{align}
In particular, every such 1-cell $\xi$ can be decomposed
\begin{align}\label{eq:pathDecomp}
\xi=\sum_{i=1}^n !:[1]\rightarrow[\xi_i],\mathrm{ with } \sum_{i=1}^n \xi_i=m.
\end{align}
The image of $\xi$ under the isomorphism is an order preserving function that takes $\xi_i$ points in $ \langle m \rangle $ to $i\in  \langle n \rangle $. An example of the isomorphism, for $n=2$ and $m=3$ can be visualized as
\begin{equation}
\begin{tikzpicture}[baseline=(current  bounding  box.center)]
\def \strx {0.4};
\def \stry {1};
{
\node (n3) at (-5 * \strx ,1) {$[3]$};
\node (n2) at (-5 * \strx ,-1) {$[2]$};
\path[<-,font=\scriptsize,>=angle 90,line width=1pt]
(n3) edge node[left] {$\xi$} (n2);
}
{
\node (m3) at (5 * \strx ,1) {$\langle 3 \rangle $};
\node (m2) at (5 * \strx ,-1) {$\langle 2 \rangle $};
\path[->,font=\scriptsize,>=angle 90]
(m3) edge node[right] {$\tilde\xi$} (m2);
}

\node (u0) at (-3 * \strx ,1) {$0$};
\node (u1) at (-1 * \strx ,1 * \stry) {$1$};
\node (u2) at (1 * \strx ,1 * \stry) {$2$};
\node (u3) at (3 * \strx ,1 * \stry) {$3$};

\node (d0) at (-2* \strx ,-1 * \stry) {$0$};
\node (d1) at (0 * \strx ,-1 * \stry) {$1$};
\node (d2) at (2 * \strx ,-1 * \stry) {$2$};

\path[->,font=\scriptsize,>=angle 90]
(u0) edge node[below] (au1) {} (u1);
\path[->,font=\scriptsize,>=angle 90]
(u1) edge node[below] (au2) {} (u2);
\path[->,font=\scriptsize,>=angle 90]
(u2) edge node[below] (au3) {} (u3);
\path[->,font=\scriptsize,>=angle 90]
(d0) edge node[above] (ad1) {} (d1);
\path[->,font=\scriptsize,>=angle 90]
(d1) edge node[above] (ad2) {} (d2);

{
\path[|->,font=\scriptsize,>=angle 90,line width=1pt]
(d0) edge (u0);
\path[|->,font=\scriptsize,>=angle 90,line width=1pt]
(d1) edge (u3);
\path[|->,font=\scriptsize,>=angle 90,line width=1pt]
(d2) edge (u3);
}

{
\path[|->,font=\scriptsize,>=angle 90]
(au1) edge (ad1);
\path[|->,font=\scriptsize,>=angle 90]
(au2) edge (ad1);
\path[|->,font=\scriptsize,>=angle 90]
(au3) edge (ad1);
}

\end{tikzpicture}
\end{equation}

The embedding $\Delta_{\bot\top}\hookrightarrow\Delta_a$ is a monoidal functor with comparison maps representing
\begin{align}
 \langle 0 \rangle &\xrightarrow{\partial_0^0} \langle 1 \rangle  = [0] \label{eq:compDeltaId}\\
{[}n]\oplus[n']=\langle n+n'+2 \rangle &\xrightarrow{z_{n,n'}:=\sigma_{n}^{n+n'+1}}\langle n+n'+1 \rangle =[n]+[n']\label{eq:compDeltaComp}
\end{align}

There is a functor
\begin{align}
\Delta_{\bot\top}^{op} &\xrightarrow{L} \Delta_{a}\\
{[}n{]}=\langle n+1 \rangle  &\mapsto  \langle n+1 \rangle \\
\sigma^n_i &\mapsto  \partial_{i+1}^n,\, i=0,\ldots,n-1\\
\partial^n_i &\mapsto  \sigma_i^n,\, i=1,\ldots,n-1
\end{align}
assigning to each 1-cell in $\Delta_{\bot\top}$ its left adjoint (Galois connection) in $\Delta_{a}$. Explicitly, for $\xi:[n]\rightarrow [m]$,
\begin{align}
L(\xi):\langle m+1 \rangle &\rightarrow \langle n+1 \rangle \\
i&\mapsto  \min\{j|i\leq \xi(j)\}.
\end{align}
The functor $L$ is oplax monoidal, with the same comparison maps (\ref{eq:compDeltaId})-(\ref{eq:compDeltaComp}), but the naturality holds up to a 2-cell
\begin{equation}\label{eq:Loplax}
L(\xi+\xi')\circ z_{m,m'} \Rightarrow z_{n,n'}\circ (L\xi\oplus L\xi').
\end{equation}
Dually, there is a lax monoidal functor $\Delta_{\bot\top}^{op} \xrightarrow{R} \Delta_{a}$ assigning right adjoints, with a 2-cell
\begin{equation}\label{eq:Rlax}
R(\xi+\xi')\circ z_{m,m'}\Leftarrow z_{n,n'}\circ (R\xi\oplus R\xi').
\end{equation}

The free 2-category containing a monad \cite{Lawvere1969} is obtained as the suspension of the monoidal category of intervals,
\begin{equation}
\mathrm{FM}:=\Sigma \Delta_{\bot\top}.
\end{equation}

\subsection{Shuffles - free distributive law}

A shuffle of $\langle n \rangle $ and $\langle m \rangle $ in $\Delta_a$ is defined to be a pair of complement embeddings $\langle n \rangle \rightarrow\langle n+m \rangle \leftarrow\langle m \rangle $. Shuffles in $\Delta_{\bot\top}$ are inherited via the isomorphism (\ref{eq:isoDelta}) and have the following explicit description:
\begin{align}
[n]\xleftarrow{r} [n+m] \xrightarrow{s} [m]
\end{align} 
with the constraint 
\begin{equation}\label{eq:rscomp}
r_i+s_i=1\,.
\end{equation}
The numbers $r_i$ and $s_i$ are lengths (either 0 or 1 in this case) of the image of the $i^\mathrm{th}$ subinterval of $[n+m]$, as in (\ref{eq:pathDecomp}). The condition (\ref{eq:rscomp}) states that each subinterval maps to an interval of length $1$ either in $[n]$ or in $[m]$.

An equivalent description of a shuffle is given by a relation of ``appearing before in the shuffle''
\begin{equation}
\langle m \rangle^\mathrm{op} \times \langle n \rangle
\xrightarrow{l} \langle 2 \rangle\,.
\end{equation}
The same relation can be interpreted as a shuffle of segments $[n]$ and $[m]$, for example
\begin{equation}\label{diag:shufMat}
\begin{tikzpicture}[baseline=(current  bounding  box.center)]
\def \str {0.8}
\node (A0) at (\str * -0.5,\str * 0) {$[2]\backslash [3]$};
\node (C1) at (\str * 1,\str * 0) {$0$};
\node (C2) at (\str * 3,\str * 0) {$1$};
\node (C3) at (\str * 5,\str * 0) {$2$};
\node (C4) at (\str * 7,\str * 0) {$3$};

\node (D1) at (-0.5 * \str,-1 * \str) {$0$};
\node (D2) at (-0.5 * \str,-2 * \str) {$1$};
\node (D3) at (-0.5 * \str,-3 * \str) {$2$};

\node (A11) at (2 * \str,-1.5 * \str) {$0$};
\node (A12) at (4 * \str,-1.5 * \str) {$1$};
\node (A13) at (6 * \str,-1.5 * \str) {$1$};
\node (A11) at (2 * \str,-2.5 * \str) {$0$};
\node (A11) at (4 * \str,-2.5 * \str) {$0$};
\node (A11) at (6 * \str,-2.5 * \str) {$0$};

\path[->,font=\scriptsize,>=angle 90]
(C1) edge node[above] {} (C2)
(C2) edge node[above] {} (C3)
(C3) edge node[above] {} (C4);

\path[->,font=\scriptsize,>=angle 90]
(D1) edge node[left] {} (D2)
(D2) edge node[left] {} (D3);

\node (C1D1) at (1 * \str,-1 * \str) {};
\node (C2D1) at (3 * \str,-1 * \str) {};
\node (C2D2) at (3 * \str,-2 * \str) {};
\node (C3D2) at (5 * \str,-2 * \str) {};
\node (C4D2) at (7 * \str,-2 * \str) {};
\node (C4D3) at (7 * \str,-3 * \str) {};

\path[->,font=\scriptsize,>=angle 90]
(C1D1) edge node[above] {} (C2D1)
(C2D1) edge node[left] {} (C2D2)
(C2D2) edge node[above] {} (C3D2)
(C3D2) edge node[above] {} (C4D2)
(C4D2) edge node[left] {} (C4D3);
\end{tikzpicture}
\end{equation}

A shuffle morphism $(\xi,\rho):(n,m,s,r)\rightarrow(\bar n,\bar m,\bar s,\bar r)$ consists of 1-cells $\xi:[\bar n]\rightarrow [n]$ and $\rho:[\bar m]\rightarrow [m]$ in $\Delta_{\bot\top}$, such that the following 2-cell in $\Delta_a$ exists
\begin{equation}\label{eq:shufmornat}
Lr\circ\xi\circ \bar r \Rightarrow Rs\circ\rho\circ\bar s\;.
\end{equation}
When $\xi=1_{[n]}$ and $\rho=1_{[m]}$, the condition (\ref{eq:shufmornat}) is equivalent to the fact that the induced relations $l,\bar l:\langle m \rangle^\mathrm{op} \times \langle n \rangle
\xrightarrow{} \langle 2 \rangle$ satisfy $l\leq \bar l$, or that the $\bar l$ path in the table (\ref{diag:shufMat}) appears to the down-left of the $l$ path.

Shuffles and their morphisms form a category $\textbf{Shuff}$ with the identity morphism $(1_{[n]},1_{[m]})$ and composition $(\xi\circ \bar \xi,\rho\circ \bar \rho)$ for which the condition (\ref{eq:shufmornat}) is obtained by pasting
\begin{equation}
\begin{tikzpicture}[baseline=(current  bounding  box.center)]
\def \strx {4.0}
\def \stry {1.5}
\def \gmoff {0.7}
\node (C1) at (-\strx,0) {$[\bar{\bar n}+\bar{\bar m}]$};
\node (C2) at (0,0) {$[\bar{ n}+\bar{ m}]$};
\node (C3) at (\strx,0) {$[n+m]$};
\node (U1) at (-0.75 * \strx,\stry) {$[\bar{\bar n}]$};
\node (U2) at (-0.25 * \strx,\stry) {$[\bar{n}]$};
\node (U3) at (0.25 * \strx,\stry) {$[\bar{n}]$};
\node (U4) at (0.75 * \strx,\stry) {$[n]$};
\node (D1) at (-0.75 * \strx,-\stry) {$[\bar{\bar m}]$};
\node (D2) at (-0.25 * \strx,-\stry) {$[\bar{m}]$};
\node (D3) at (0.25 * \strx,-\stry) {$[\bar{m}]$};
\node (D4) at (0.75 * \strx,-\stry) {$[m]$};
\path[->,font=\scriptsize,>=angle 90]
(C1) edge [bend left] node[above left] {$\bar{\bar r}$} (U1);
\path[->,font=\scriptsize,>=angle 90]
(C1) edge [bend right] node[below left] {$\bar{ \bar s}$} (D1);

\path[->,font=\scriptsize,>=angle 90]
(U1) edge node[above] {$\bar{\xi}$} (U2);
\path[->,font=\scriptsize,>=angle 90]
(D1) edge node[below] {$\bar{\rho}$} (D2);

\path[->,font=\scriptsize,>=angle 90]
(U2) edge node[left] {$L\bar{r}$} (C2);
\path[->,font=\scriptsize,>=angle 90]
(D2) edge node[left] {$R\bar{s}$} (C2);

\path[->,font=\scriptsize,>=angle 90]
(U2) edge node[above] {$1$} (U3);
\path[->,font=\scriptsize,>=angle 90]
(D2) edge node[below] {$1$} (D3);

\path[->,font=\scriptsize,>=angle 90]
(C2) edge node[right] {$\bar{r}$} (U3);
\path[->,font=\scriptsize,>=angle 90]
(C2) edge node[right] {$\bar{s}$} (D3);

\path[->,font=\scriptsize,>=angle 90]
(U3) edge node[above] {$\xi$} (U4);
\path[->,font=\scriptsize,>=angle 90]
(D3) edge node[below] {$\rho$} (D4);

\path[->,font=\scriptsize,>=angle 90]
(U4) edge [bend left] node[above right] {$Lr$} (C3);
\path[->,font=\scriptsize,>=angle 90]
(D4) edge [bend right] node[below right] {$Rs$} (C3);

\node at (-0.5 * \strx,0) {\scriptsize {$\Downarrow$}};
\node at (0.5 * \strx,0) {\scriptsize {$\Downarrow$}};
\node at (0,\stry * \gmoff) {\scriptsize $\eta$ {$\Downarrow$}};
\node at (0,-\stry * \gmoff) {\scriptsize $\epsilon$ {$\Downarrow$}};
\end{tikzpicture}
\end{equation}

$\textbf{Shuff}$ inherits a tensor product from $\Delta_{\bot\top}$ which (algebraically) follows from
\begin{align}
\hspace{-3mm}
L(r+r')\circ(\xi+\xi')\circ(\bar r+\bar r')\circ z
&\overset{(\ref{eq:compDeltaComp})}{=} 
L(r+r')\circ z \circ (\xi\oplus\xi')\circ(\bar r\oplus\bar r')\\
&\overset{(\ref{eq:Loplax})}{\Rightarrow} 
z \circ(Lr\oplus Lr')\circ (\xi\oplus\xi')\circ(\bar r\oplus\bar r')\\
&\overset{(\ref{eq:shufmornat})}{\Rightarrow}
z \circ(Rs\oplus Rs')\circ  (\rho\oplus\rho')\circ(\bar s\oplus\bar s')\\
&\overset{(\ref{eq:Rlax})}{\Rightarrow}
R(s+s')\circ  z  \circ (\rho\oplus\rho')\circ(\bar s\oplus\bar s')\\
&\overset{(\ref{eq:compDeltaComp})}{=}
R(s+s')\circ (\rho+\rho')\circ(\bar s+\bar s')\circ z 
\end{align}
but can also be seen as ``direct summing''\footnote{As one would direct sum $k$-matrices between finite-dimensional $k$-vector spaces}
the relation tables, for example the shuffle (\ref{diag:shufMat}) can be interpreted as $([2]\xleftarrow{\sigma^3_1} [3] \xrightarrow{\sigma^2_0\circ \sigma^3_2} [1])+([1]\xleftarrow{\sigma^2_1} [2] \xrightarrow{\sigma^2_0} [1])$.

The free 2-category containing a distributive law is obtained as the suspension of the monoidal category of shuffles,
\begin{equation}
	\mathrm{FDL}:=\Sigma \textbf{Shuff}.
\end{equation}

\subsection{Mixed shuffle morphisms - free mixed distributive law}

The category of mixed shuffles $\textbf{MShuff}$ can be obtained by slightly modifying the construction of $\textbf{Shuff}$; the $\rho$ component of the mixed shuffle morphism has the opposite direction $\rho:[m]\rightarrow[\bar m]$, and the existence condition (\ref{eq:shufmornat}) becomes
\begin{align}
	Lr\circ\xi\circ\bar r\Rightarrow Rs\circ R\rho\circ\bar s.
\end{align}

The 2-category containing a free mixed distributive law (FMDL) is obtained as the suspension of the monoidal category of mixed shuffles,
\begin{equation}
\mathrm{FMDL}:=\Sigma \textbf{MShuff}.
\end{equation}

\bibliographystyle{acm}

\end{document}